\newcommand{\eps}{\varepsilon}
\DeclareMathOperator{\BV}{BV}
\DeclareMathOperator{\Id}{Id}
\DeclareMathOperator{\Var}{Var}
\DeclareMathOperator{\curl}{curl}
\DeclareMathOperator{\BMO}{BMO}
\DeclareMathOperator{\Div}{div}
\renewcommand{\leq}{\leqslant}
\renewcommand{\geq}{\geqslant}
\theoremstyle{plain}\newtheorem{Th}{Theorem}
\theoremstyle{plain}\newtheorem{Le}{Lemma}
\theoremstyle{plain}\newtheorem{Lemma}[Le]{Lemma}
\theoremstyle{plain}
\theoremstyle{plain}\newtheorem{Con}{Conjecture}
\theoremstyle{plain}
\begin{document}

\title{Dimension of gradient measures\thanks{The authors are greatful to A.A.~Logunov for suggestions concerning exposition of the material.}}
\author{D.~M.~Stolyarov\thanks{Supported by: Chebyshev Laboratory (SPbU), RF Government grant no. 11.G34.31.0026; JSC ``Gazprom Neft''; RFBR grant no. 14-01-00198 A.} \and M.~Wojciechowski\thanks{Supported by NCN grant no. N N201 607840.}}

\maketitle
\begin{abstract}
We prove that if pure derivatives with respect to all coordinates of a function on~$\mathbb{R}^n$ are signed measures\textup, then their lower Hausdorff dimension is at least~$n-1$. The derivatives with respect to different coordinates may be of different order. 
\end{abstract}

\section{Introduction}\label{s1}
We begin with a well-known fact: if a function~$f$ is in~$\BV$, then the lower Hausdorff dimension of~$\nabla f$ is not less than~$n-1$ (see~\cite{AFP}, Lemma~$3.76$). By the lower Hausdorff dimension of a vector-valued complex measure~$\mu$ we mean 
\begin{equation}\label{DimensionOfMeasure}
\dim \mu = \inf \{\alpha\mid \exists F\hbox{~--- Borel set},\quad~\mu(F) \ne 0,\quad \dim F \leq \alpha\}.
\end{equation}
In~\cite{RW}, this fact was treated as a manifestation of some more general uncertainty-type principle. We use notation from that paper. Namely, let~$\phi: S^{n-1} \to S^{n-1}$ be a mapping. Consider the class~$M_{\phi}$ of vector-valued  signed measures~$\mu$ such that~$\hat{\mu}(\xi) \parallel \phi(\frac{\xi}{|\xi|})$ for all~$\xi \in \mathbb{R}^n \setminus\{0\}$. From the celebrated theorem of Uchiyama~\cite{U}, it follows that if~$\phi(\xi)$ is not parallel to~$\phi(-\xi)$ for all~$\xi \in S^{n-1}$, then every~$\mu$ from~$M_{\phi}$ is absolutely continuous. However, can one say something if this condition does not hold? We cite a simpler version Theorem~$3$ from~\cite{RW}.
\begin{Th}\label{GeneralPhi}
Suppose that the image of~$\phi$ contains~$n$ linearly independent points~$\phi(h_1),\phi(h_2),\ldots,\phi(h_n)$ and~$\phi$ is~$\alpha$-H\"older in neighborhoods of~$h_i$\textup,~$i = 1,2,\ldots,n$,~$\alpha > \frac12$. Then~$\dim \mu \geq 1$ for all~$\mu \in M_{\phi}$.
\end{Th}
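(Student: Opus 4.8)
Negating the conclusion and using the definition of $\dim\mu$, we may suppose that $0\ne\mu\in M_\phi$ is concentrated on a Borel set $F$ with $\dim F<1$, so that $\mathcal H^1(F)=0$; passing from $\dim\mu<1$ to this situation deserves a word of care, since $M_\phi$ is not stable under restriction, but I take it for granted here. Write $\hat\mu(\xi)=c(\xi)\,\phi(\xi/|\xi|)$ with $c(\xi):=\langle\hat\mu(\xi),\phi(\xi/|\xi|)\rangle$, so $|c|\le\|\mu\|$ everywhere. The goal is to show that $\mu$ is $v_i$-valued for each $i$, where $v_i:=\phi(h_i)$; since $v_1,\dots,v_n$ are linearly independent and $n\ge 2$, this forces $\mu=0$.

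The main tool is the F.\ and M.\ Riesz theorem applied to one-dimensional projections. For $e\in S^{n-1}$ let $\pi_e\colon\mathbb R^n\to\mathbb R e$ be the orthogonal projection; then $\widehat{\pi_{e\sharp}\mu}(t)=\hat\mu(te)$, and $\pi_{e\sharp}\mu$ is concentrated on $\pi_e(F)$, which is Lebesgue-null because $\pi_e$ is $1$-Lipschitz and such maps do not increase $\mathcal H^1$. For any $w\perp\phi(e)$ the scalar measure $\langle\pi_{e\sharp}\mu,w\rangle$ has Fourier transform $t\mapsto c(te)\langle\phi(e),w\rangle$, which vanishes for $t>0$; hence it is absolutely continuous, and being carried by a null set it is $0$. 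Therefore $\pi_{e\sharp}\mu=\sigma_e\,\phi(e)$ for a scalar measure $\sigma_e$ on $\mathbb R$ supported in $\pi_e(F)$, with $\widehat{\sigma_e}(t)=c(te)$ for $t>0$; inspecting $t<0$ (where $\widehat{\sigma_e}(t)\phi(e)=c(te)\phi(-e)$, and applying F.\ and M.\ Riesz once more to rule out $\widehat{\sigma_e}$ being supported in $[0,\infty)$) yields, for every $e$, the dichotomy: either $c(\cdot\,e)\equiv0$ on the whole line $\mathbb R e$, or $\phi(e)\parallel\phi(-e)$ and $\sigma_e\ne0$.

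The final, and I expect hardest, step is to upgrade this to $\mu$ being $v_i$-valued, using the hypotheses on $\phi$. Fix $i$ and work over the cone on a neighbourhood $U$ of $h_i$ on which $\phi$ is $\alpha$-H\"older; there $\phi(\theta)=v_i+O(\operatorname{dist}(\theta,h_i)^\alpha)$, so the component $P_i\hat\mu$ of $\hat\mu$ orthogonal to $v_i$ satisfies $|P_i\hat\mu(\xi)|\lesssim\|\mu\|\,\operatorname{dist}(\xi/|\xi|,h_i)^\alpha$ on that cone, and in particular vanishes on the ray $\mathbb R_+h_i$. One wants to conclude that $P_i\mu$ — still carried by a set of dimension $<1$ — is $0$. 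The natural attempt is an energy (or duality) estimate for $P_i\mu$ into which this H\"older bound is fed: the part of $\int|\widehat{P_i\mu}(\xi)|^2|\xi|^{s-n}\,d\xi$ coming from the cone on $U$ involves, in the angular variable transverse to $\mathbb R h_i$, an integral comparable to $\int\operatorname{dist}(\theta,h_i)^{-2(1-\alpha)}\,d\theta$ over a piece of $S^{n-1}$, which converges precisely when $2(1-\alpha)<n-1$; after projecting $\mu$ onto a suitable $2$-plane one is reduced to the planar case $n=2$, where the requirement becomes exactly $\alpha>\tfrac12$ and is sharp. I therefore anticipate the crux to be a rate-sensitive refinement of the F.\ and M.\ Riesz / analytic-signal mechanism of the previous paragraph — one turning the $\alpha$-H\"older vanishing of $P_i\hat\mu$ near the ray, together with the null support, into $P_i\mu=0$ — together with making the planar reduction precise; everything else is bookkeeping.
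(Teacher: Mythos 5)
The paper does not prove this theorem: it is quoted from~\cite{RW} (``a simpler version of Theorem~$3$'' there), and the only indication of its proof given in the introduction is that it ``is based on application of the classical F.~and M.~Rieszs' theorem on the continuity of analytic signed measure.'' Your attempt reaches for F.~and M.~Riesz, so the choice of tool agrees with that remark; but the two places you flag as needing care are genuine gaps in the argument, not bookkeeping. The opening reduction --- ``we may suppose that $\mu$ is concentrated on a Borel set $F$ with $\dim F<1$'' --- is not available: $\dim\mu<1$ only gives $\mu(F)\ne0$ for \emph{some} such $F$, and cutting $\mu$ down to $F$ (or to any singular piece) destroys the global Fourier constraint $\hat\mu\parallel\phi$ that defines $M_\phi$. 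Everything downstream (``$\pi_{e\sharp}\mu$ is concentrated on the Lebesgue-null set $\pi_e(F)$, hence the $\phi(e)^\perp$-component vanishes'') rests on this unwarranted reduction. Applied to the actual $\mu\in M_\phi$, F.~and M.~Riesz only yields that the $\phi(e)^\perp$-component of $\pi_{e\sharp}\mu$ is absolutely continuous; turning that into $\mu(F)=0$ needs a further argument which you do not supply.

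The second gap is the one you yourself identify as the crux: promoting the directional dichotomy to ``$\mu$ is $v_i$-valued for every $i$'' via the H\"older hypothesis. This is the heart of the theorem --- it is where $\alpha>\tfrac12$ must enter --- and it is not argued. The energy-integral sketch does not close even modulo the first gap: $\int|\widehat{P_i\mu}(\xi)|^2|\xi|^{s-n}\,d\xi$ must be taken over all of frequency space (or over the whole $2$-plane after projection), and outside a thin cone around $\mathbb{R}_+h_i$ the H\"older bound on $\phi$ gives no decay whatsoever, so you control only a piece of an integral that is a priori divergent whenever $P_i\mu$ is carried by a set of dimension below $1$. Your heuristic does land on the right threshold and the planar reduction is the right instinct, but the actual mechanism converting ``$\widehat{P_i\mu}$ vanishes to order $\alpha$ near a ray'' together with ``singular support'' into ``$P_i\mu=0$'' is exactly what has to be produced, and it is missing. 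For the genuine argument you should consult the proof of Theorem~$3$ in~\cite{RW} rather than try to reconstruct it.
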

The relationsip between~$\BV$ and~$M_{\phi}$ can be expressed by the formula~$\{\nabla f \mid f \in \BV(\mathbb{R}^n)\} = M_{i\Id}$, where~$i\Id$ is the identity map on the sphere. In this particular case, Theorem~\ref{GeneralPhi} is weaker (we get only dimension~$1$). 
One can make a courageous conjecture (Conjecture~$1$ in~\cite{RW}).
\begin{Con}\label{GeneralConj}
Suppose that the function~$\phi$ is Lipschitz and its image contains~$n$ linearly independent points. Then~$\dim \mu \geq n-1$ for all~$\mu \in M_{\phi}$.
\end{Con}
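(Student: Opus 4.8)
\subsection*{A strategy for Conjecture~\ref{GeneralConj}}

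\emph{Reduction to a local statement.} It suffices to prove that $|\mu|(B)=0$ for every Borel set $B$ with $\mathcal H^{n-1}(B)=0$: if some $F$ with $\dim F\leq\alpha<n-1$ had $\mu(F)\neq0$, then $\mathcal H^{n-1}(F)=0$, contradicting this (the converse is false but unneeded; this stronger property is exactly the one that holds in the $\BV$ case). Recall the mechanism for $\mu=\nabla f$: for each $i$ the slicing formula $D_i f=\int_{e_i^\perp}D(f_y)\otimes\delta_y\,d\mathcal L^{n-1}(y)$, with $f_y$ the trace of $f$ on the line $y+\mathbb Re_i$ and density $y\mapsto\Var(f_y)\in L^1$, has base measure $\mathcal L^{n-1}$; since $\mathcal H^{n-1}(B)=0$ forces $B\cap(y+\mathbb Re_i)=\varnothing$ for $\mathcal L^{n-1}$-a.e.\ $y$, each $|D_i f|$ annihilates $B$, and the $D_i f$ together recover $\mu$. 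The plan is to manufacture this Lebesgue-based slicing structure for a general $\mu\in M_\phi$.

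\emph{Reductions on $\phi$ and passage to scalar components.} Writing $\hat\mu=g\,\phi(\xi/|\xi|)$ for a scalar distribution $g$, one sees that $\mu$ takes values in $V:=\operatorname{span}\phi(S^{n-1})$, so we may assume $\mathbb R^m=V$; note $\dim V\geq n$. Choose directions $\theta_1,\dots,\theta_m\in S^{n-1}$ with $\phi(\theta_1),\dots,\phi(\theta_m)$ a basis of $V$ and let $\ell_1,\dots,\ell_m\in V^*$ be the dual basis. Since $\mu(F)\neq0$ forces $\langle\ell_k,\mu\rangle(F)\neq0$ for some $k$, it is enough to show that each scalar measure $\sigma_k:=\langle\ell_k,\mu\rangle$ does not charge $\mathcal H^{n-1}$-null sets. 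Fix $k$; put $\theta_0:=\theta_k$, $v_0:=\phi(\theta_0)$, $\ell:=\ell_k$, so $\ell(v_0)=1$. On the ray $\mathbb R_+\theta_0$ one has $\widehat{\sigma_k}(t\theta_0)=g(t\theta_0)$, and for $\theta$ near $\theta_0$ the Lipschitz hypothesis gives $\ell(\phi(\theta))=1+O(|\theta-\theta_0|)$.

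\emph{Main step: a slicing theorem along each $\theta_k$.} Disintegrate the finite measure $|\sigma_k|$ over the transverse hyperplane $\theta_0^\perp$ along the foliation by lines $y+\mathbb R\theta_0$, writing $|\sigma_k|=\int_{\theta_0^\perp}\rho^y\otimes\delta_y\,d\lambda(y)$ with $\lambda=\operatorname{proj}_{\theta_0^\perp}|\sigma_k|$. The crux is the claim $\lambda\ll\mathcal L^{n-1}$; granting it, $\mathcal H^{n-1}(B)=0$ gives $B\cap(y+\mathbb R\theta_0)=\varnothing$ for $\lambda$-a.e.\ $y$, whence $|\sigma_k|(B)=0$. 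To establish $\lambda\ll\mathcal L^{n-1}$ one would localize $\sigma_k$ in frequency to a thin cone $C$ about $\mathbb R\theta_0$ by a homogeneous cutoff $\chi(\xi/|\xi|)$: on $C$ the symbol of $\sigma_k$ is $g(\xi)\ell(\phi(\xi/|\xi|))=g(\xi)(1+e(\xi))$ with $|e(\xi)|\lesssim\operatorname{dist}(\xi/|\xi|,\theta_0)$, so the cone piece of $\sigma_k$ is, up to the error multiplier $1+e$, a scalar measure with frequencies pinched against the line $\mathbb R\theta_0$, hence morally a product of a one-dimensional measure in the $\theta_0$-variable with an approximate identity in the transverse variables at the scale of the aperture of $C$. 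Shrinking the aperture, using that $\sigma_k$ is a genuine finite measure, and summing over a cover of $S^{n-1}$ by such cones centred at a sufficiently rich family of directions (available because $\phi$ is Lipschitz \emph{everywhere}), one expects to read off the Lebesgue-based disintegration, with the Lipschitz bound on $e$ keeping the accumulated errors under control.

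\emph{Where the difficulty lies.} The real obstacle is precisely the passage from the non-local Fourier condition to the local, Lebesgue-based disintegration: a frequency-localized piece of a measure is no longer a measure, and quantifying ``a product up to the multiplier $1+e$'' so as to conclude absolute continuity of the \emph{base} $\lambda$ — not of $\sigma_k$ itself, which is false — needs a real-variable input beyond the $\BV$ toolbox. Two further points are genuinely delicate: when $\dim V>n$ the slicing argument must be run for a whole spanning family of directions, and the symbols $\xi\mapsto\ell_k(\phi(\xi/|\xi|))$ need not be bounded Fourier multipliers (Lipschitz and homogeneous of degree $0$ is not Mikhlin once $n\geq3$), so the usual $L^p$-multiplier shortcuts are unavailable; and controlling the frequencies left over by every cone is automatic for $\BV$ only because there every direction is good. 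I expect the slicing claim $\lambda\ll\mathcal L^{n-1}$ together with the treatment of the leftover frequencies to absorb essentially all of the work. Finally, the case $n=2$ is already contained in Theorem~\ref{GeneralPhi}, which makes an induction on $n$ tempting; but restricting $\hat\mu$ to an $(n-1)$-dimensional frequency subspace $W$ realizes $\operatorname{proj}_W\mu$ only after one first restricts $\mu$ to a low-dimensional carrier — which destroys membership in $M_\phi$ — so a workable induction would have to carry the Fourier condition through slicing rather than projection, which is itself the unresolved core of the problem.
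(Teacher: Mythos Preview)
The statement you address is labeled a \emph{Conjecture} in the paper, and the authors say explicitly that they are ``not being able to prove the conjecture''; there is no proof in the paper to compare against. What the paper does prove is the special case (Theorems~\ref{DerivativesTheorem} and~\ref{DifferentDerivativesTheorem}) where $\phi(\xi)=(i\xi)^m/|\xi^m|$, and the method there is quite different from your outline: it rests on a limiting Sobolev embedding (Kolyada's theorem) to control $D_i^{m_i-1}f$ in $L^{q_i}$, and feeds this into a Frostman-type lemma (Lemma~\ref{FrostmanGeneralization}) applied to the pushforward of $\mu_i$ to the transverse hyperplane. In Section~\ref{s4} the authors suggest that the full conjecture might yield to embedding theorems for vector fields in the sense of Van~Schaftingen, not to a direct disintegration/frequency-localization argument.

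Your proposal is candid about not being a proof, and the crux you isolate --- that the projection $\lambda=\operatorname{proj}_{\theta_0^\perp}|\sigma_k|$ is absolutely continuous with respect to $\mathcal L^{n-1}$ --- is exactly the unresolved core. The frequency-cone heuristic you sketch (localize to a thin cone about $\mathbb R\theta_0$, write the symbol as $1+e$ with small $e$, read off an approximate product structure) does not close for the reasons you yourself list: cone-localized pieces of a measure are not measures, the multipliers $\ell_k(\phi(\xi/|\xi|))$ are merely Lipschitz and homogeneous of degree zero and hence not Mikhlin for $n\geq3$, and summing over a cone cover recovers $\sigma_k$ only modulo pieces you do not control. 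These are not technicalities a careful write-up would absorb; they are precisely why the conjecture remains open. Your preliminary reductions (passing to scalar components via a dual basis, reducing to $\mathcal H^{n-1}$-null sets) are correct but are also the easy part; note, incidentally, that the paper's route through Lemma~\ref{FrostmanGeneralization} does \emph{not} require the strong statement $|\mu|(B)=0$ for every $\mathcal H^{n-1}$-null $B$, only the weaker dimension bound, which may be a more realistic target.
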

Not being able to prove the conjecture, we state a result that lies towards it. In what follows,~$D_i$ means ``the derivative with respect to~$x_i$''. Unless specified, all measures are not assumed to be positive or real-valued.
\begin{Th}\label{DerivativesTheorem}
Let~$m$ be a natural number. Let~$f$ be a function such that for all~$i$~$D_i^m f$ is a measure. Then~$\dim \mu \geq n-1$\textup, where~$\mu$ is a vector-valued measure whose components are~$D_i^m f$.
\end{Th}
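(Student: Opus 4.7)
The plan is to prove the statement coordinate by coordinate: I will show that for every Borel $F \subset \mathbb{R}^n$ with $\dim F < n-1$ and every $i \in \{1, \ldots, n\}$, one has $\mu_i(F) = 0$, whence $\dim \mu \geq n-1$. By coordinate symmetry fix $i = 1$ and let $\pi \colon \mathbb{R}^n \to \mathbb{R}^{n-1}$ be the projection dropping $x_1$. Since $\pi$ is $1$-Lipschitz, $\dim \pi(F) \leq \dim F < n-1$, so $\mathcal{L}^{n-1}(\pi(F)) = 0$; the inclusion $F \subset \pi^{-1}(\pi(F))$ then reduces the problem to showing $|\mu_1|(\pi^{-1}(A)) = 0$ whenever $A \subset \mathbb{R}^{n-1}$ is Lebesgue null.

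The engine is a one-dimensional slicing of $\mu_1$: for a.e.\ $x' \in \mathbb{R}^{n-1}$ the one-variable distribution $D_1^m[f(\cdot, x')]$ is a finite signed Radon measure $\mu_1^{x'}$ with
\begin{equation*}
\int_{\mathbb{R}^{n-1}} |\mu_1^{x'}|(\mathbb{R})\, dx' \leq |\mu_1|(\mathbb{R}^n).
\end{equation*}
Granted this, for any smooth $\phi$ with $\|\phi\|_\infty \leq 1$ supported in $\pi^{-1}(V)$, where $V \subset \mathbb{R}^{n-1}$ is open, integration by parts in $x_1$ together with Fubini yields
\begin{equation*}
\Bigl| \int \phi\, d\mu_1 \Bigr| = \Bigl| \int_V \int_{\mathbb{R}} \phi(x_1, x')\, d\mu_1^{x'}(x_1)\, dx' \Bigr| \leq \int_V |\mu_1^{x'}|(\mathbb{R})\, dx'.
\end{equation*}
Taking the supremum over $\phi$ and letting $V$ shrink to $A$ gives $|\mu_1|(\pi^{-1}(A)) = 0$ by absolute continuity of the Lebesgue integral.

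The main obstacle is the slicing itself when $m \geq 2$, where the classical $\BV$ slicing (\cite{AFP}, Theorem~$3.107$) does not apply directly. My plan is to mollify: for $f_\epsilon = f * \rho_\epsilon$ with $\rho_\epsilon$ a standard mollifier, the smooth function $D_1^m f_\epsilon = \mu_1 * \rho_\epsilon$ has $L^1(\mathbb{R}^n)$ norm at most $|\mu_1|(\mathbb{R}^n)$, and Fubini makes the section norms $\|D_1^m f_\epsilon(\cdot, x')\|_{L^1(\mathbb{R})}$ a family uniformly bounded in $L^1(\mathbb{R}^{n-1})$. For a.e.\ $x'$ one has $f_\epsilon(\cdot, x') \to f(\cdot, x')$ locally in $L^1(\mathbb{R})$, hence $D_1^m f_\epsilon(\cdot, x') \to D_1^m[f(\cdot, x')]$ as one-dimensional distributions; along a subsequence on which the section norms remain bounded, lower semicontinuity of total variation identifies the limit with a Radon measure $\mu_1^{x'}$, and Fatou's lemma yields the asserted integral inequality. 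Repeating the argument for each coordinate $i$ finishes the proof.
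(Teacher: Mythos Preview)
Your argument is correct, modulo two routine patches you should make explicit: first localise by multiplying $f$ by a cutoff so that $|\mu_1|(\mathbb{R}^n)<\infty$ before running the mollification; second, the slice convergence $f_\epsilon(\cdot,x')\to f(\cdot,x')$ in $L^1_{\mathrm{loc}}(\mathbb{R})$ for a.e.\ $x'$ holds only along a subsequence of $\epsilon\to 0$, which is all you need. The disintegration identity $\int\phi\,d\mu_1=\int_{\mathbb{R}^{n-1}}\int_{\mathbb{R}}\phi\,d\mu_1^{x'}\,dx'$ then follows directly from Fubini and the definition of the distributional derivative, and your Fatou/lower-semicontinuity step gives $\int|\mu_1^{x'}|(\mathbb{R})\,dx'\leq|\mu_1|(\mathbb{R}^n)$ as claimed.

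This is a genuinely different and considerably more elementary route than the paper's. The paper bounds $\|D_i^{m-1}f\|_{L^{q_i}}$ by $\sum_j\Var\mu_j$ via Kolyada's anisotropic embedding (Theorem~\ref{NaturalEmbedding} and inequality~\eqref{MeasureEmbedding}) and then feeds the resulting estimate into a generalised Frostman lemma (Lemma~\ref{FrostmanGeneralization}); all $n$ derivatives enter simultaneously through the embedding. Your proof is purely one-directional: it shows that the pushforward of $|\mu_i|$ under the projection dropping $x_i$ is absolutely continuous with respect to $\mathcal{L}^{n-1}$, using only that $f\in L^1_{\mathrm{loc}}$ and that the single derivative $D_i^{m}f$ is a measure --- the other $\mu_j$ are never touched. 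In effect you have carried the one-directional $\BV$-slicing of~\cite{AFP} over to higher-order pure derivatives, which the introduction had flagged as ``questionable'' for Theorem~\ref{DerivativesTheorem}. What the paper's heavier machinery buys is a template aimed at Conjecture~\ref{GeneralConj}, where the Fourier constraint on $\mu$ has no distinguished coordinate direction and a slicing argument like yours is unavailable.
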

This theorem is a particular case of Conjecture~\ref{GeneralConj},~$\mu \in M_{\phi}$, where
\begin{equation*}
\phi(\xi) = \frac{(i\xi)^m}{|\xi^m|}.
\end{equation*}
When the orders of derivation differ, the homogeneity is not isotropic. However, in this case we still have the same principle.
\begin{Th}\label{DifferentDerivativesTheorem}
Let~$m_1,m_2,\ldots,m_n$ be natural numbers. Let~$f$ be a function such that for all~$i$~$D_i^{m_i} f$ is a measure. Then~$\dim \mu \geq n-1$\textup, where~$\mu$ is a vector-valued measure whose components are~$D_i^{m_i} f$.
\end{Th}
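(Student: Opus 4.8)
The plan is to reduce the statement to a one‑dimensional slicing argument carried out separately in each of the $n$ coordinate directions, in the spirit of the classical fact that $\BV$ gradients do not charge $\mathcal H^{n-1}$‑null sets (\cite{AFP}, Lemma~$3.76$). We may interpret ``$f$ is a function'' as $f\in L^1_{\mathrm{loc}}(\mathbb R^n)$, so that $\nu_i:=D_i^{m_i}f$ makes sense distributionally and is, by hypothesis, a locally finite measure. Fix a Borel set $F$ with $\dim F<n-1$; it is enough to show $\nu_i(F)=0$ for every $i$, since then $\mu(F)=0$. As $\dim F<n-1$ forces $\mathcal H^{n-1}(F)=0$ and the coordinate projection $\pi_i\colon\mathbb R^n\to\mathbb R^{n-1}$ forgetting $x_i$ is $1$‑Lipschitz, $\pi_i(F)$ is Lebesgue‑null in $\mathbb R^{n-1}$; equivalently, $F\cap\ell_y=\varnothing$ for Lebesgue‑a.e.\ $y$, where $\ell_y=\{x: x_j=y_j,\ j\ne i\}$.

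The heart of the matter is a structural lemma --- the only place where it matters that $f$ is an honest locally integrable function, and the only place where the order $m_i$ plays any role: for each $i$, the measure $\nu_i=D_i^{m_i}f$ is absolutely continuous with respect to $(n-1)$‑dimensional Lebesgue measure in the transverse variables $(x_j)_{j\ne i}$; consequently $\nu_i$ disintegrates as $\nu_i=\int_{\mathbb R^{n-1}}(\nu_i)_y\,dy$, where for a.e.\ $y$ the slice $(\nu_i)_y$ is a locally finite measure on the line $\ell_y$ and in fact $(\nu_i)_y=\bigl(f|_{\ell_y}\bigr)^{(m_i)}$. Granting the lemma, the theorem follows immediately: since distinct lines $\ell_y$ are disjoint, $|\nu_i|=\int|(\nu_i)_y|\,dy$, hence $|\nu_i|(F)=\int_{\mathbb R^{n-1}}|(\nu_i)_y|\bigl(F\cap\ell_y\bigr)\,dy=0$ because $F\cap\ell_y=\varnothing$ for a.e.\ $y$. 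Since this holds for each $i$, $\mu(F)=0$, and as $F$ was an arbitrary Borel set of dimension $<n-1$ we conclude $\dim\mu\ge n-1$. Note that nothing in this scheme distinguishes Theorem~\ref{DerivativesTheorem} from Theorem~\ref{DifferentDerivativesTheorem}: the directions are handled one at a time and the slicing in direction $i$ only ever involves the single order $m_i$; in fact each $\nu_i$ on its own already has lower Hausdorff dimension at least $n-1$.

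It remains to prove the lemma, which I would do as a higher‑order version of the one‑dimensional sections theorem for $\BV$ functions (see \cite{AFP}, around Theorem~$3.103$). Mollify $f$ only in the transverse variables, $f^\eps=f*_y\rho_\eps$; then $f^\eps$ is smooth in $y$, tends to $f$ in $L^1_{\mathrm{loc}}$, and $D_i^{m_i}f^\eps=\nu_i*_y\rho_\eps$ is manifestly absolutely continuous in $y$, so it disintegrates with measure‑valued slices, which --- because $f^\eps|_{\ell_y}\in L^1_{\mathrm{loc}}(\mathbb R)$ for every $y$ --- must equal $\bigl(f^\eps|_{\ell_y}\bigr)^{(m_i)}$. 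In particular $D_i f^\eps,\dots,D_i^{m_i-1}f^\eps$ are locally integrable functions, and integrating the one‑dimensional total variation of $\bigl(f^\eps|_{\ell_y}\bigr)^{(m_i-1)}$ over $y$ gives a bound by $|\nu_i|$ on a slightly larger box plus boundary terms that are finite for a.e.\ choice of box. Letting $\eps\to0$ and using lower semicontinuity of one‑dimensional essential variation along $L^1_{\mathrm{loc}}$‑convergence of the slices (with Fatou, and weak‑$*$ compactness of measures of bounded total variation) yields that $\bigl(f|_{\ell_y}\bigr)^{(m_i)}$ is a measure for a.e.\ $y$, with $y\mapsto\bigl\|(f|_{\ell_y})^{(m_i)}\bigr\|$ locally integrable; that these slices reassemble to $\nu_i$ against Lebesgue measure in $y$ is then just Fubini applied to the $L^1_{\mathrm{loc}}$ function $f$. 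The routine but delicate point --- and the step I expect to be the main obstacle --- is precisely this limit: one has to rule out concentration of the approximating measures in the transverse directions, which is exactly what being a function (rather than merely a distribution) prevents, and which fails for instance for $f=H(x_1)\otimes\delta_0(x_2,\dots,x_n)$, where $D_1 f$ is an atom although $f$ is not a function (and, consistently, $D_j f$ for $j\ge2$ is then not a measure).
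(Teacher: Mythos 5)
Your plan is genuinely different from the paper's, and the overall scheme is sound — but the proof you offer for the structural lemma (the heart of the argument) has a gap, which you yourself flag at the end.

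The paper's proof uses an anisotropic limiting embedding theorem (Theorem~\ref{BesovEmbedding}/\ref{NaturalEmbedding}) together with a quantitative replacement of Frostman's lemma (Lemma~\ref{FrostmanGeneralization}); the key estimate (Lemma~\ref{MainEstimate}) has the \emph{sum} $\sum_j\Var\mu_j$ on the right, so it genuinely uses that all $n$ pure derivatives are measures simultaneously. Your route is an elementary measure-theoretic one: show that $\nu_i=D_i^{m_i}f$ cannot concentrate transversally (i.e.\ its pushforward under the coordinate projection forgetting $x_i$ is Lebesgue-absolutely continuous), then lift via the Lipschitz-projection argument, which is precisely the paper's own Lemma~\ref{MeasureSection}. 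If the structural lemma is established, your route actually gives something the paper's does not: each $\nu_i$ individually has $\dim\nu_i\ge n-1$ whenever $f\in L^1_{\mathrm{loc}}$ and $\nu_i$ is a locally finite measure, with no hypothesis on the other derivatives. This is a real difference, and it is a reasonable guess that the paper's embedding route was preferred because it is more likely to survive the passage to Conjecture~\ref{GeneralConj}, where no slicing structure is available.

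The gap: the mollification scheme $f^\eps=f*_y\rho_\eps$ with the uniform bound $\int\bigl|(\nu_i^\eps)_y\bigr|\,dy\lesssim|\nu_i|$, followed by Fatou and lower semicontinuity of one-dimensional variation, does \emph{not} exclude transversal concentration. If $\nu_i$ had a nontrivial part singular with respect to Lebesgue in the transverse variables, the slices $(\nu_i^\eps)_y$ would for a.e.\ $y$ converge to the slices of the absolutely continuous part only, and the Fatou inequality simply drops the singular mass rather than detecting it. The sentence ``which is exactly what being a function prevents'' is the entire content of the lemma and is not delivered by the scheme as written. Fortunately the lemma is true and admits a shorter direct proof that avoids the limit entirely: test $\mu_i=D_i^{m_i}f$ against products $\varphi(x_i)\psi(x_{[i]})$ and integrate by parts $m_i$ times in $x_i$ to get
$\int\varphi\psi\,d\mu_i=(-1)^{m_i}\int f(x)\,\varphi^{(m_i)}(x_i)\psi(x_{[i]})\,dx$.
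As a functional of $\psi$ the right-hand side is integration against the density
$g_\varphi(y)=(-1)^{m_i}\int f(x_i,y)\varphi^{(m_i)}(x_i)\,dx_i\in L^1_{\mathrm{loc}}(\mathbb{R}^{n-1})$
(by Fubini, since $f\in L^1_{\mathrm{loc}}$). Hence the transverse pushforward of $\varphi\,\mu_i$ equals $g_\varphi\,dy$ as measures, so it is absolutely continuous; consequently $\int_{\mathbb{R}\times A}\varphi(x_i)\,d\mu_i=0$ for every Lebesgue-null $A\subset\mathbb{R}^{n-1}$. Approximating indicators of intervals by smooth $\varphi$ yields $\mu_i(I\times A)=0$, and the paper's Lemma~\ref{MeasureSection} finishes. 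You should replace the $\eps\to0$ argument by this computation; as it stands the proposed proof of the key lemma does not close.
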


The basic fact about~$\BV$-functions we statrted with can be proved by several methods. In~\cite{AFP}, the proof is based on the coarea formula for~$\BV$-functions. It gives more information about those ``parts'' of~$\nabla f$ that have dimension~$n-1$: they are situated on the jumps of~$f$. However, the applicability of the methods from~\cite{AFP} both to Conjecture~\ref{GeneralConj} and even to Theorem~\ref{DerivativesTheorem} is questionable. The proof of Theorem~\ref{GeneralPhi} is based on application of the classical~F.~and~M. Rieszs' theorem (see~\cite{K}, p.~$28$) on the continuity of analytic signed measure. This gives only dimension~$1$ (however, allows one to avoid all the algebraic structure of~$\phi$). 

Our strategy is, in some sense, a mixture of the two proofs indicated above. The coarea formula is replaced by the Sobolev embedding theorem with the limiting summation exponent, and the Rieszs' theorem is replaced by some modification of the Frostman lemma. 

In Section~\ref{s2} we prove Theorem~\ref{DifferentDerivativesTheorem} (and Theorem~\ref{DerivativesTheorem} as a particular case), except for the modification of the Frostman lemma, which we prove in~Section~\ref{s3}. The last Section~\ref{s4} contains some examples and some suggestions how to prove Conjecture~\ref{GeneralConj}.

\section{Proof of Theorem~\ref{DifferentDerivativesTheorem}}\label{s2}
We begin with an exposition of the embedding theorem we are going use. We need some Besov spaces. The reader unfamiliar with them can either consult~\cite{BIN,P}, or skip the details and pass to Theorem~\ref{NaturalEmbedding} directly. 

By~$W_1^{m}$,~$m = (m_1,m_2,\ldots,m_n)$, we denote the completion of the set~$C_0^{\infty}(\mathbb{R}^n)$ in the norm
\begin{equation}\label{SobolevNorm}
\|f\|_{W_1^{m}} = \sum\limits_{i=1}^n \big\|D_i^{m_i} f\big\|_{L^1}.
\end{equation}
Another norm on the set~$C_0^{\infty}(\mathbb{R}^n)$ describes the one-dimensional Besov spaces (i.e. we measure the norm of a single derivative of a function in~$\mathbb{R}^n$),
\begin{equation}\label{BesovNorm}
\|f\|_{B^{i,\ell}_{q,\theta}} = \bigg(\int\limits_0^{\infty} \Big(h^{-\ell}\big\| \Delta_i^s(h) f\big\|_{L^q(\mathbb{R}^n)}\Big)^{\theta} \frac{dh}{h}\bigg)^{\frac{1}{\theta}}.
\end{equation}
Here~$i$ is the number of the coordinate,~$i = 1,2,\ldots,n$;~$\ell$ is the order of differentiation,~$0 < \ell$;~$s$ is an auxiliary integer parameter,~$\ell < s$;~$\Delta_i^{s}(h)$ is the finite difference operator of order~$s$ and step~$h$ with respect to the~$i$-th coordinate;~$\theta$ is the interpolation parameter.

We cite Theorem~$4$ from~\cite{Kol} (see Theorem~$\mathrm{B}$ in~\cite{Kol2} and~\cite{Kol3} also).
\begin{Th}\label{BesovEmbedding}
Let~$f$ be a function in~$W_1^{m}$. Then\textup, for each~$i = 1,2,\ldots,n$ and any~$\ell_i < m_i$\textup, the inequality
\begin{equation*}
\|f\|_{B^{i,\ell_i}_{q,1}} \lesssim \sum_{j=1}^n\big \|D_j^{m_j} f\big\|_{L^1}
\end{equation*}
holds true\textup, if the parameters satisfy the homogeneity condition
\begin{equation*}
\ell_i = m_i\Big(1 - \frac{q-1}{q}\sum\limits_{j=1}^n\frac{1}{m_j}\Big).
\end{equation*} 
\end{Th}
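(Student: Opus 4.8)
The statement is an anisotropic endpoint ($\theta=1$) Sobolev--Besov embedding, and I would prove it along the classical real-variable lines: a one-dimensional ``finite difference equals a scale-$h$ average of a derivative'' identity in the distinguished direction, the Gagliardo--Loomis--Whitney mechanism in the transverse directions, and a rearrangement argument for the endpoint summation over scales. First the reductions. By density it suffices to treat $f\in C_0^\infty(\mathbb R^n)$. Fix $i$ and pass to the multiplicative (Gagliardo--Nirenberg-type) form
\begin{equation*}
\|f\|_{B^{i,\ell_i}_{q,1}}\lesssim\prod_{j=1}^n\big\|D_j^{m_j}f\big\|_{L^1}^{\theta_j}.
\end{equation*}
The exponents are forced by testing against the anisotropic dilations $x_j\mapsto\lambda_j x_j$: using $\big\|D_j^{m_j}f_\lambda\big\|_{L^1}=\lambda_j^{m_j}(\prod_k\lambda_k)^{-1}\big\|D_j^{m_j}f\big\|_{L^1}$ and $\|f_\lambda\|_{B^{i,\ell_i}_{q,1}}=\lambda_i^{\ell_i}(\prod_k\lambda_k)^{-1/q}\|f\|_{B^{i,\ell_i}_{q,1}}$ and matching the power of each $\lambda_j$ gives $\theta_j=\frac1{m_j q'}$ for $j\neq i$ and $m_i\theta_i=\ell_i+\frac1{q'}$, where $q'=q/(q-1)$. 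The condition $\sum_j\theta_j=1$ needed for such an argument is precisely the homogeneity relation $\ell_i=m_i\big(1-\frac{q-1}{q}\sum_j\frac1{m_j}\big)$; since all $\theta_j$ are positive and sum to $1$, the weighted AM--GM inequality converts the right-hand side into $\sum_j\theta_j\big\|D_j^{m_j}f\big\|_{L^1}\leq\sum_j\big\|D_j^{m_j}f\big\|_{L^1}$, which is the claim. From now on $i=n$ and $x=(x',x_n)$ with $x'\in\mathbb R^{n-1}$.

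Next, the one-dimensional mechanism. Choose the auxiliary integer $s>m_n$ (only $s>\ell_n$ is required, and $\ell_n<m_n$). Writing $\Delta_n^s(h)=\Delta_n^{s-m_n}(h)\circ\Delta_n^{m_n}(h)$, iterating $\Delta_n(h)g(t)=\int_0^h g'(t+\tau)\,d\tau$ exactly $m_n$ times, and using boundedness of $\Delta_n^{s-m_n}(h)$ on every $L^r$, I would represent $\Delta_n^s(h)f(x',\cdot)$ as $h^{m_n}$ times a fixed linear combination of translates of a scale-$h$ mollification in $x_n$ of $D_n^{m_n}f(x',\cdot)$. This yields two slice bounds, uniform in $x'$: the gain bound $\big\|\Delta_n^s(h)f(x',\cdot)\big\|_{L^r(dx_n)}\lesssim h^{m_n}\big\|D_n^{m_n}f(x',\cdot)\big\|_{L^r(dx_n)}$, and the trivial bound $\big\|\Delta_n^s(h)f(x',\cdot)\big\|_{L^r(dx_n)}\lesssim\|f(x',\cdot)\|_{L^r(dx_n)}$. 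The first, used with $r=1$, supplies the decay in $h$; the second keeps things bounded for large $h$, after which the tail $\int_1^\infty h^{-\ell_n}(\cdot)\,\frac{dh}{h}$ converges because $\ell_n>0$.

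Then, the transverse integration. To bound $\|\Delta_n^s(h)f\|_{L^q(\mathbb R^n)}$ one must control the $x'$-integral of the $L^q(dx_n)$-norms of the slices. I would feed the two slice bounds into the classical anisotropic Gagliardo--Nirenberg--Sobolev and Loomis--Whitney inequalities in the variables $x_1,\dots,x_{n-1}$ (the one-dimensional $W_1^{m_j}$-embeddings in $x_j$ followed by Loomis--Whitney), distributing the integrability budget $\frac1{q'}$ over all $n$ directions in the proportions dictated by $\theta_1,\dots,\theta_n$. Interpolating between the $L^1(dx_n)$ gain bound and the $L^\infty(dx_n)$ (or $L^q(dx_n)$) bound then lands one in $L^q(\mathbb R^n)$ and produces, for each $h\leq1$, an estimate of the shape $\|\Delta_n^s(h)f\|_{L^q(\mathbb R^n)}\lesssim h^{\ell_n}\cdot\prod_j\big\|D_j^{m_j}f\big\|_{L^1}^{\theta_j}$.

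Finally, the obstacle. The bound just obtained has the sharp power $h^{\ell_n}$ and nothing more, so $\int_0^1 h^{-\ell_n}\|\Delta_n^s(h)f\|_{L^q}\,\frac{dh}{h}$ diverges logarithmically: the soft argument only proves membership in $B^{n,\ell_n}_{q,\infty}$ (or in $B^{n,\ell_n}_{q,\theta}$ for $\theta>1$), not the endpoint $\theta=1$ asserted. Upgrading to $\theta=1$ is the technical heart, and here I would follow Kolyada: replace the pointwise slice bounds by estimates for the non-increasing rearrangements of $D_n^{m_n}f$ and of the transverse derivatives. The point is that at scale $h$ the ``bad'' set — where the scale-$h$ average of $D_n^{m_n}f$ along $x_n$ is comparable with its maximum, so the factor $h^{m_n}$ is not yet realized — has measure controlled by a positive power of $h$, and this extra decay is exactly what turns the divergent $\frac{dh}{h}$-integral into a convergent one. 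Carrying out this rearrangement bookkeeping, and verifying that the accumulated exponents assemble into the stated homogeneity relation, is the only non-routine part of the argument.
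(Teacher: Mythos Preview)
The paper does not prove this theorem: it is quoted verbatim as Theorem~4 of Kolyada~\cite{Kol} (with pointers to~\cite{Kol2,Kol3}) and used as a black box to derive Theorem~\ref{NaturalEmbedding}. There is therefore no proof in the paper to compare your proposal against.

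As for the proposal itself: your scaling computation is correct and the identification $\sum_j\theta_j=1\Leftrightarrow$ the stated homogeneity relation is right, so the multiplicative reduction is sound. You also correctly locate the only real difficulty: the finite-difference/Loomis--Whitney mechanism you describe produces the sharp power $h^{\ell_i}$ and nothing more, hence only $B^{i,\ell_i}_{q,\infty}$, and the upgrade to $\theta=1$ requires Kolyada's rearrangement argument. But that upgrade \emph{is} the theorem --- your last paragraph defers precisely the step that distinguishes the result being cited from the elementary $\theta=\infty$ version. So what you have written is an accurate map of where the difficulty lies and why the cited reference is needed, rather than an independent proof.
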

Now we fix~$\ell_i = m_i - 1$, therefore,
\begin{equation}\label{Homq}
\frac{q-1}{q} = \Big(\sum\limits_{j=1}^n \frac{m_i}{m_j}\Big)^{-1}.
\end{equation}
In particular, if all the derivatives are of equal order,~$q = \frac{n}{n-1}$. The equality~\eqref{Homq} matches its individual~$q$ to each~$m_i$, we denote it by~$q_i$.
Using the easy embedding (see~\cite{P}, p. 62) for~$\theta = 1$
\begin{equation*}
\big\| D_i^{m_i - 1}f\big\|_{L^{q_i}} \lesssim \|f\|_{B^{i,m_i-1}_{q_i,1}},
\end{equation*}
we get the following embedding theorem without Besov spaces.
\begin{Th}\label{NaturalEmbedding}
Let~$f$ be a function in~$W_1^{m}$. Then\textup, for each~$i = 1,2,\ldots,n$\textup,
\begin{equation*}
\big\|D_i^{m_i - 1} f\big\|_{L^{q_i}} \lesssim \sum_{j=1}^n \big \|D_j^{m_j} f\big\|_{L^1},
\end{equation*}
if the parameters satisfy the homogeneity condition~\eqref{Homq}.
\end{Th}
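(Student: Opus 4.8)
The plan is to obtain Theorem~\ref{NaturalEmbedding} as a direct corollary of the anisotropic Besov embedding stated in Theorem~\ref{BesovEmbedding}, composed with the elementary one-dimensional embedding $\|D_i^{m_i-1}f\|_{L^{q_i}} \lesssim \|f\|_{B^{i,m_i-1}_{q_i,1}}$ recalled just before the statement. Suppose first that $m_i \geq 2$ for every $i$, so that the smoothness index $\ell_i := m_i-1$ lies strictly between $0$ and $m_i$, which is exactly the range in which the Besov norm~\eqref{BesovNorm} is defined. Fix an index $i$ and apply Theorem~\ref{BesovEmbedding} with this $\ell_i$. Its homogeneity constraint $\ell_i = m_i\bigl(1 - \tfrac{q-1}{q}\sum_{j=1}^n \tfrac1{m_j}\bigr)$, after substituting $\ell_i = m_i-1$ and simplifying, reads $1 = m_i\,\tfrac{q-1}{q}\sum_{j=1}^n \tfrac1{m_j}$, i.e.\ $\tfrac{q-1}{q} = \bigl(\sum_{j=1}^n \tfrac{m_i}{m_j}\bigr)^{-1}$, which is precisely~\eqref{Homq} and singles out $q = q_i$. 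Thus the only ``computation'' in this case is verifying that $\ell_i = m_i-1$ is an admissible choice and that it reproduces the normalization~\eqref{Homq}; both are immediate.

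One must still check that the resulting exponent is legitimate. Since the $j=i$ term of $\sum_{j}\tfrac{m_i}{m_j}$ equals $1$ and every other term is positive, the right-hand side of~\eqref{Homq} lies in $(0,1]$, with the value $1$ occurring only in the degenerate case $n=1$; hence $q_i\in(1,\infty)$ for $n\geq 2$ (and $q_i=\infty$ when $n=1$, a case already contained in the fundamental theorem of calculus). With $q_i$ so defined, Theorem~\ref{BesovEmbedding} yields $\|f\|_{B^{i,m_i-1}_{q_i,1}} \lesssim \sum_{j=1}^n \|D_j^{m_j}f\|_{L^1}$, and composing with the easy embedding $\|D_i^{m_i-1}f\|_{L^{q_i}} \lesssim \|f\|_{B^{i,m_i-1}_{q_i,1}}$ gives the asserted inequality for this $i$. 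As $i$ was arbitrary, Theorem~\ref{NaturalEmbedding} follows in the case $\min_i m_i\geq 2$.

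The remaining case is that of an index $i$ with $m_i=1$: then $\ell_i=0$, and the space $B^{i,0}_{q_i,1}$ is not covered by~\eqref{BesovNorm} (the defining integral diverges as $h\to\infty$), so the argument above does not apply verbatim. For such $i$ the claimed estimate is $\|f\|_{L^{q_i}} \lesssim \sum_{j=1}^n\|D_j^{m_j}f\|_{L^1}$ with $\tfrac{q_i-1}{q_i}=\bigl(\sum_{j=1}^n\tfrac1{m_j}\bigr)^{-1}$; when all $m_j=1$ this is the Gagliardo--Nirenberg--Sobolev inequality $\|f\|_{L^{n/(n-1)}}\lesssim\sum_j\|D_jf\|_{L^1}$, and in general it is its anisotropic analogue. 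Here the plan is to invoke Kolyada's embedding in its endpoint form (the $\ell_i=0$ case of~\cite{Kol,Kol2,Kol3}, where the target is $L^{q_i}$, possibly sharpened to the Lorentz space $L^{q_i,1}\hookrightarrow L^{q_i}$), so that no Besov space is needed; alternatively one can derive this endpoint inequality from the mixed-norm $L^1$ Sobolev estimate by the classical Gagliardo tensorization argument. This boundary case is the only genuinely separate piece of work, and, being a well-known endpoint Sobolev inequality, it is where whatever difficulty there is resides; for all indices with $m_i\geq 2$ the theorem is merely a bookkeeping consequence of Theorem~\ref{BesovEmbedding}.
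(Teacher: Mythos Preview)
Your argument is correct and follows exactly the paper's route: apply Theorem~\ref{BesovEmbedding} with $\ell_i = m_i - 1$ (which forces $q = q_i$ via~\eqref{Homq}) and then compose with the easy embedding $\|D_i^{m_i-1}f\|_{L^{q_i}} \lesssim \|f\|_{B^{i,m_i-1}_{q_i,1}}$. Your additional care with the endpoint $m_i = 1$ (where $\ell_i = 0$ falls outside the stated range $\ell>0$ of~\eqref{BesovNorm}) is a legitimate refinement that the paper simply glosses over.
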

Suppose now that~$f$ is a function with compact support such that~$\mu_i = D_i^{m_i} f$ is a measure for all~$i$. Then, 
\begin{equation}\label{MeasureEmbedding}
\big\|D_i^{m_i - 1}f\big\|_{L^{q_i}} \lesssim \sum_{j=1}^n \Var \mu_j.
\end{equation}
This can be deduced from Theorem~\ref{NaturalEmbedding} by a simple limiting argument. We skip the details.

Let~$\varphi$ be a function in~$C_0^{\infty}(\mathbb{R}^{n-1})$ supported in a unit ball. Let~$\varphi_r(x) = \varphi(r^{-1}x)$,~$r > 0$. For~$x = (x_1,x_2,\ldots,x_n) \in \mathbb{R}^n$ we write~$x_{[i]}$ for a~$(n-1)$-dimensional vector that is obtained from~$x$ by forgetting the~$i$-th coordinate (for example, for~$n=3$,~$x_{[2]} = (x_1,x_3)$). By~$B_{r}(z)$ we denote a~$(n-1)$-dimensional ball of radius~$r$ centered at~$z$
\begin{Le}\label{MainEstimate}
Let the balls~$B_{r_j}(y_j)$ be disjoint\textup, let~$\psi \in C_0^{\infty}(\mathbb{R})$ be a test function. Suppose that~$f$ is a compactly supported function. If~$\mu = (D_i^{m_i} f)_i $ is a measure\textup, then\textup, for all~$i = 1,2,\ldots,n$ and any~$\varphi \in C^{\infty}_0$ supported in a unit ball\textup,
\begin{equation*}
\sum\limits_j\Big|\int\limits_{\mathbb{R}^n}\psi(x_i)\varphi_{r_j}(x_{[i]} + y_j)\,d\mu_i(x)\Big| \lesssim \Big(\sum\limits_{j}r_j^{n-1}\Big)^{\frac{1}{q'_i}}\Var\mu
\end{equation*}
for some fixed~$q'_i$ and all~$y_j \in\mathbb{R}^{n-1}$ and~$r_j < 1$ uniformly \textup(the constants may depend on~$\varphi$ and~$\psi$\textup).
\end{Le}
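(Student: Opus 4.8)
The plan is to integrate by parts once in the $i$-th variable so as to replace the measure $\mu_i = D_i^{m_i} f$ by the function $g := D_i^{m_i-1} f$, which by the embedding~\eqref{MeasureEmbedding} lies in $L^{q_i}$ with $\|g\|_{L^{q_i}} \lesssim \Var\mu$. The point is that the corresponding exponent satisfies $q_i > 1$; more precisely, by~\eqref{Homq} its conjugate is $q_i' = \frac{q_i}{q_i-1} = \sum_{j=1}^n \frac{m_i}{m_j} \in [1,\infty)$, and this is the ``fixed $q_i'$'' of the statement. Once we work with $g \in L^{q_i}$, this gain of integrability lets H\"older's inequality, combined with the disjointness of the balls, produce exactly the factor $\big(\sum_j r_j^{n-1}\big)^{1/q_i'}$.

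First I would note that, since $\psi \in C_0^\infty(\mathbb{R})$ and $\varphi$ has compact support, each function $\Phi_j(x) := \psi(x_i)\varphi_{r_j}(x_{[i]}+y_j)$ belongs to $C_0^\infty(\mathbb{R}^n)$, and $D_i\Phi_j(x) = \psi'(x_i)\varphi_{r_j}(x_{[i]}+y_j)$ because $\varphi_{r_j}(x_{[i]}+y_j)$ does not depend on $x_i$. Since $D_i g = \mu_i$ in the sense of distributions and $g \in L^1_{\mathrm{loc}}$, integration by parts gives
\[
\int_{\mathbb{R}^n}\psi(x_i)\varphi_{r_j}(x_{[i]}+y_j)\,d\mu_i(x) = -\int_{\mathbb{R}^n}\psi'(x_i)\varphi_{r_j}(x_{[i]}+y_j)\,g(x)\,dx .
\]
Choosing signs $\eps_j \in \{-1,+1\}$ that make every term of the sum nonnegative and summing over $j$, one obtains
\[
\sum_j\Big|\int_{\mathbb{R}^n}\psi(x_i)\varphi_{r_j}(x_{[i]}+y_j)\,d\mu_i\Big| = \Big|\int_{\mathbb{R}^n}\psi'(x_i)\Big(\sum_j\eps_j\varphi_{r_j}(x_{[i]}+y_j)\Big)g(x)\,dx\Big|,
\]
and H\"older's inequality in $\mathbb{R}^n$ with exponents $q_i, q_i'$, followed by Fubini to separate the $x_i$- and $x_{[i]}$-integrations, bounds the right-hand side by
\[
\|g\|_{L^{q_i}(\mathbb{R}^n)}\cdot\|\psi'\|_{L^{q_i'}(\mathbb{R})}\cdot\Big\|\sum_j\eps_j\varphi_{r_j}(\,\cdot\,+y_j)\Big\|_{L^{q_i'}(\mathbb{R}^{n-1})} .
\]

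It then remains to estimate the last factor, and this is where disjointness enters. The function $u\mapsto\varphi_{r_j}(u+y_j)$ on $\mathbb{R}^{n-1}$ is supported in the ball of radius $r_j$ centered at $-y_j$, and disjointness of the balls $B_{r_j}(y_j)$ is equivalent to disjointness of these balls; hence for every $x_{[i]}$ at most one summand is nonzero, so
\[
\Big\|\sum_j\eps_j\varphi_{r_j}(\,\cdot\,+y_j)\Big\|_{L^{q_i'}(\mathbb{R}^{n-1})}^{q_i'} = \sum_j\int_{\mathbb{R}^{n-1}}\big|\varphi_{r_j}(u+y_j)\big|^{q_i'}\,du = \|\varphi\|_{L^{q_i'}}^{q_i'}\sum_j r_j^{n-1},
\]
the last equality being the change of variables $u \mapsto r_j u - y_j$. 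Combining the three displays with $\|g\|_{L^{q_i}} \lesssim \Var\mu$ from~\eqref{MeasureEmbedding} gives the assertion, with implicit constant depending only on $\varphi$, $\psi$ and the exponents $m_1,\dots,m_n$.

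I expect the only point requiring care to be the justification of the integration by parts when $\mu_i$ is merely a measure: this is handled precisely by carrying it out for $g = D_i^{m_i-1}f \in L^{q_i}$ rather than for $f$ itself, which is legitimate thanks to the limiting form~\eqref{MeasureEmbedding} of Theorem~\ref{NaturalEmbedding} and the identity $D_i g = \mu_i$. Conceptually the whole estimate rests on the gain of integrability $q_i > 1$ furnished by the embedding with limiting summation exponent: it is exactly this that converts the $\ell^1$-sum over disjoint $(n-1)$-dimensional balls into the sharp power $\big(\sum_j r_j^{n-1}\big)^{1/q_i'}$. Everything else is H\"older's inequality and scaling.
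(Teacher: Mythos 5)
Your proof is correct and follows essentially the same route as the paper's: integrate by parts once to pass from $\mu_i=D_i^{m_i}f$ to $g=D_i^{m_i-1}f\in L^{q_i}$ via the limiting embedding~\eqref{MeasureEmbedding}, then use H\"older with exponents $q_i,q_i'$ together with the disjointness of the balls and rescaling to produce the factor $\big(\sum_j r_j^{n-1}\big)^{1/q_i'}$. The only cosmetic difference is that you fold the paper's two applications of H\"older (term-by-term, then over the discrete index $j$) into a single one by introducing the signs $\eps_j$; note that since $\mu_i$ may be complex-valued one should allow unimodular $\eps_j$ rather than $\pm1$, but the disjoint-support computation is unaffected.
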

\begin{proof}
For simplicity, let~$i=1$. We can write
\begin{multline*}
\sum\limits_j\Big|\int\limits_{\mathbb{R}^n}\psi(x_1)\varphi_{r_j}(x_{[1]} + y_j)\,d\mu_1(x)\Big| = \sum\limits_j\Big|\int\limits_{\mathbb{R}^n}\psi'(x_1)\varphi_{r_j}(x_{[1]} + y_j)D_1^{m_1-1}f(x)\,dx\Big| \leq \\
\sum\limits_j\Big\|\psi'(x_1)\varphi_{r_j}(x_{[1]} + y_j)\Big\|_{L^{q'_1}}\Big\|D_1^{m_1-1}f\Big\|_{L^{q_1}\big(B_{r_j}(y_j)\big)} \lesssim \\ \sum\limits_j r_j^{\frac{n-1}{q_1'}}\Big\|D_1^{m_1-1}f\Big\|_{L^{q_1}\big(B_{r_j}(y_j)\big)} \leq \Big(\sum\limits_j r_j^{n-1}\Big)^{\frac{1}{q'_1}}\Big(\sum\limits_j \Big\|D_1^{m_1-1}f\Big\|_{L^{q_1}\big(B_{r_j}(y_j)\big)}^{q_1}\Big)^{\frac{1}{q_1}} \leq \\
\Big(\sum\limits_{j}r_j^{n-1}\Big)^{\frac{1}{q_1'}}\big\|D_1^{m-1}f\big\|_{L^{q_1}} \lesssim
\Big(\sum\limits_{j}r_j^{n-1}\Big)^{\frac{1}{q'_1}}\|\mu\|
\end{multline*}
Here~$q_1$ is the one taken from Theorem~\ref{MeasureEmbedding}, and~$q'_1$ is its adjoint. The first inequality is the H\"older inequality, the second one is rescaling, the third one is H\"older again, and the fourth one is inequality~\eqref{MeasureEmbedding}.
\end{proof}
The next lemma is a generalization of Frostman's lemma (see~\cite{M}, p. 112, for the original). 
\begin{Le}\label{FrostmanGeneralization}
Suppose that~$\varphi \in C^{\infty}_0(\mathbb{R}^n)$ is a radial non-negative function supported in a unit ball that decreases as the radius grows\textup,~$\varphi(x) = 1$ when~$|x| \leq\frac34$.  
Let~$\mu$ be a measure such that for every collection~$B_{r_j}(x_j)$ of~$n$-dimensional balls such that~$B_{3r_j}(x_j)$ are disjoint the estimate
\begin{equation*}
\sum\limits_{j}\Big|\int\limits_{\mathbb{R}^n}\varphi_{3r_j}(x_j + y)\,d\mu(y)\Big| \lesssim \big(\sum\limits r_j^{\alpha}\big)^{\beta}
\end{equation*}
holds true for some positive~$\alpha$ and~$\beta$. Then~$\dim(\mu) \geq \alpha$.
\end{Le}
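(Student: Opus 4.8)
The plan is to reduce, via \eqref{DimensionOfMeasure} and inner regularity of the finite measure $|\mu|$, to the following statement: if $K$ is compact and $\mathcal H^{\alpha}(K)=0$, then $|\mu|(K)=0$. Indeed, $\dim\mu\geq\alpha$ means exactly that $|\mu|(F)=0$ for every Borel set $F$ with $\dim F<\alpha$; for such $F$ one has $\mathcal H^{\alpha}(F)=0$, and if $|\mu|(F)>0$ there would be a compact $K\subset F$ with $|\mu|(K)>0$ and $\mathcal H^{\alpha}(K)=0$. So from now on $K$ is fixed and the goal is to make $|\mu|(K)$ arbitrarily small.

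I would first treat the model case $\mu\geq0$, where $|\mu|(K)=\mu(K)$ and there is no cancellation. Fix $\eps,\delta>0$ and cover $K$ by finitely many balls of radius $<\delta$ with the sum of the $\alpha$-th powers of the radii $<\eps$. The key geometric step is to pass from this cover to a collection to which the hypothesis applies: using a Besicovitch-type covering lemma in the form where, for a fixed dilation factor, the enlarged balls can still be partitioned into $N=N(n)$ subfamilies of pairwise disjoint balls (a routine strengthening of the classical Besicovitch covering theorem, cf.~\cite{M}), I obtain balls $B_{\rho_j}(c_j)$ covering $K$ with $\sum_j\rho_j^{\alpha}\lesssim\eps$ whose $\tfrac43$-dilates split into disjoint subfamilies $I_1,\dots,I_N$. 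Put $r_j=\tfrac49\rho_j$ and let $g_j=\varphi_{3r_j}(\cdot-c_j)$: it equals $1$ on $B_{\rho_j}(c_j)$ (since $\tfrac94 r_j=\rho_j$), is supported in the $\tfrac43$-dilate $B_{3r_j}(c_j)$, and takes values in $[0,1]$. Hence $\sum_j g_j\geq\mathbf 1_K$, so
\[
\mu(K)\leq\int\Big(\sum_j g_j\Big)d\mu=\sum_{c=1}^{N}\sum_{j\in I_c}\Big|\int g_j\,d\mu\Big|\lesssim\sum_{c=1}^{N}\Big(\sum_{j\in I_c}r_j^{\alpha}\Big)^{\beta}\lesssim\eps^{\beta},
\]
the middle estimate being the hypothesis applied, for each $c$, to the pairwise disjoint collection $\{B_{3r_j}(c_j)\}_{j\in I_c}$. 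Letting $\eps\to0$ gives $\mu(K)=0$.

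For a general (complex, vector-valued) $\mu$ this computation fails, since $\int g_j\,d\mu$ may be far smaller than $\int g_j\,d|\mu|$. To repair it I would invoke the Besicovitch differentiation theorem together with Egorov's theorem: writing $d\mu=\sigma\,d|\mu|$ with $|\sigma|=1$ a.e., for every $\eta>0$ there is a compact $K'\subset K$ with $|\mu|(K\setminus K')<\eta$ and an $r_0>0$ such that $\|\mu(B_r(x))/|\mu|(B_r(x))-\sigma(x)\|<\eta$ for all $x\in K'$ and $r<r_0$. Since $\varphi=\int_0^1\mathbf 1_{\{\varphi>t\}}\,dt$ and every super-level set $\{\varphi>t\}$ is a ball, any bump $g=\varphi_{3r}(\cdot-c)$ with $c\in K'$ and $3r<r_0$ is an average of indicators of balls $B_s(c)$, $s<r_0$; applying the estimate above to each of them gives $\big\|\int g\,d\mu-\sigma(c)\int g\,d|\mu|\big\|\leq\eta\int g\,d|\mu|$, hence $\int g\,d|\mu|\leq2\big\|\int g\,d\mu\big\|$ once $\eta\leq\tfrac12$. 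Now rerun the model-case covering argument with $K$ replaced by $K'$ and radii $<\min(\delta,\tfrac34 r_0)$: with $\Phi=\sum_j g_j\geq\mathbf 1_{K'}$,
\[
|\mu|(K')\leq\int\Phi\,d|\mu|=\sum_j\int g_j\,d|\mu|\leq2\sum_{c=1}^{N}\sum_{j\in I_c}\Big\|\int g_j\,d\mu\Big\|\lesssim\sum_{c=1}^{N}\Big(\sum_{j\in I_c}r_j^{\alpha}\Big)^{\beta}\lesssim\eps^{\beta},
\]
so $|\mu|(K')=0$ and $|\mu|(K)<\eta$; as $\eta$ was arbitrary, $|\mu|(K)=0$.

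The two steps I expect to be delicate are precisely these last two. The covering step is unavoidable: a connected compact set is covered by no family of disjoint balls, so the collection fed to the hypothesis must come from a bounded-multiplicity cover, and it is the built-in slack of $\varphi$ — identically $1$ on the $\tfrac34$-ball, i.e. the factor $\tfrac43$ between the support and the flat core of $\varphi_{3r}$ — that lets a Besicovitch-type cover carry the needed disjointness of the dilated balls. The density/Egorov step is the conceptual heart: the hypothesis sees only the possibly strongly cancelled vectors $\int g_j\,d\mu$, whereas the conclusion concerns $|\mu|$, and the differentiation theorem is what guarantees that on most of $K$ (in the sense of $|\mu|$) the measure looks, at small scales, like a positive multiple of a fixed unit vector, so that no cancellation occurs there.
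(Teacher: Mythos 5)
Your density/Egorov step is a legitimate (if heavier) alternative to what the paper does: instead of quoting Besicovitch differentiation for $d\mu=\sigma\,d|\mu|$ and uniformizing with Egorov, the paper reduces to a real signed measure, applies the Hahn decomposition, and proves an elementary substitute for the Lebesgue differentiation theorem (Lemma~\ref{SubstituteLebesque}) which gives the same ``no cancellation on most of $A_+$'' conclusion via a Vitali covering of the bad set. Both routes yield the crucial bound $\int g\,d\mu_+\lesssim\big|\int g\,d\mu\big|$ for bumps centered on a large compact piece of the bad set, so this part of your proposal is fine, and it has the small advantage of treating vector-valued $\mu$ directly rather than passing through real and imaginary parts.

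The genuine gap is in the covering step. The strengthening of Besicovitch you invoke --- that the cover can be chosen so that the $\lambda$-dilates of the selected balls still split into $N(n)$ pairwise disjoint subfamilies --- is false once the radii are allowed to vary over several orders of magnitude, and ``false'' here is not a technicality. Already on the line, take $R>\lambda/(\lambda-1)$, centers $x_k=\sum_{i\le k}R^i$ and radii $r_k=R^k$, $k=1,\dots,M$: then $x_j\notin B_{r_k}(x_k)$ for all $j\ne k$ (so this family survives any Besicovitch selection unchanged), the undilated balls split into two disjoint subfamilies as they must, but for every pair $j<k$ one has $|x_j-x_k|=R^{j+1}+\cdots+R^k<\lambda(R^j+R^k)$, so \emph{all} $\lambda$-dilated balls pairwise intersect and no bound $N(n,\lambda)$ on the number of disjoint subfamilies can hold. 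Since the $4/3$-slack built into $\varphi$ forces exactly such a dilation, your chain $\mu(K)\le\sum_c\big(\sum_{j\in I_c}r_j^\alpha\big)^\beta\lesssim\eps^\beta$ does not go through: the number of colours $c$ is uncontrolled, and because $\beta$ may be $<1$ you cannot recombine the subfamilies either. This is precisely why the paper works one dyadic scale at a time --- it covers $F$, groups the covering balls into classes $E_k$ of comparable radius, uses the pigeonhole principle to find a single scale carrying a $1/k^2$ fraction of the mass, and only at that scale passes to a $2^{-k}$-separated subset $D_k$, where the dilated balls genuinely have bounded overlap and hence split into boundedly many disjoint families. The $1/k^2$ loss is then beaten by the power gain $\delta^{\beta(\alpha-\alpha^-)}$. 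Some such scale-selection device appears unavoidable, and your argument needs to incorporate it before it can be considered complete.
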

We will prove it in Section~\ref{s3}. 
\begin{Lemma}\label{MeasureSection}
Let~$\mu$ be a Borel measure on~$\mathbb{R}^{k+l}$. Suppose that~$\mu(I\times A) = 0$ for every parallelepiped~$I \subset \mathbb{R}^k$ and every Borel~$A\subset\mathbb{R}^l$ such that~$\dim A < \alpha$. Then~$\dim \mu \geq \alpha$.
\end{Lemma}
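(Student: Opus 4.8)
The plan is to prove the contrapositive: if $F\subseteq\mathbb{R}^{k+l}$ is a Borel set with $\dim F<\alpha$, then $\mu(F)=0$; by the definition~\eqref{DimensionOfMeasure} of $\dim\mu$ this is exactly what is required. So I fix such an $F$ together with a number $s$ satisfying $\dim F<s<\alpha$, so that $\mathcal{H}^{s}(F)=0$.

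The first step is to replace $F$ by a product set. Let $P\colon\mathbb{R}^{k+l}\to\mathbb{R}^{l}$ be the projection onto the last $l$ coordinates; it is $1$-Lipschitz, hence $\mathcal{H}^{s}(P(F))\leq\mathcal{H}^{s}(F)=0$. The shadow $P(F)$ is only analytic, not necessarily Borel, but by Borel regularity of the Hausdorff measure there is a Borel set $B\supseteq P(F)$ with $\mathcal{H}^{s}(B)=0$, whence $\dim B\leq s<\alpha$ and $F\subseteq\mathbb{R}^{k}\times B$. It therefore suffices to show that $\mu$ vanishes on every Borel subset of $\mathbb{R}^{k}\times B$.

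If $\mu$ is a positive measure this is immediate from the hypothesis: apply it to an increasing sequence of boxes exhausting $\mathbb{R}^{k}$ to get $\mu(\mathbb{R}^{k}\times B)=0$. In general one argues through a $\pi$-system. Every Borel set $B'\subseteq B$ still has $\dim B'<\alpha$, so the hypothesis gives $\mu(I\times B')=0$ for every parallelepiped $I\subseteq\mathbb{R}^{k}$ and every Borel $B'\subseteq B$. Fixing $B'$, the finite (signed, complex, or vector-valued) measure $E\mapsto\mu(E\times B')$ on $\mathbb{R}^{k}$ then vanishes on the $\pi$-system of boxes, which generates $\mathcal{B}(\mathbb{R}^{k})$; by the uniqueness theorem for measures — applied to the Jordan decomposition, coordinatewise in the vector case — it vanishes identically. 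Hence $\mu(E\times B')=0$ for all Borel $E\subseteq\mathbb{R}^{k}$ and $B'\subseteq B$, that is, $\mu$ kills every measurable rectangle inside $\mathbb{R}^{k}\times B$. Since these rectangles form a $\pi$-system generating the Borel $\sigma$-algebra of $\mathbb{R}^{k}\times B$, the same uniqueness principle shows that $\mu$ restricted to $\mathbb{R}^{k}\times B$ is the zero measure, and in particular $\mu(F)=0$.

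The routine parts are soft measure theory; the two points that actually need care are the non-Borelianity of the shadow $P(F)$ — which is why one passes to a Hausdorff-null Borel envelope $B$ instead of using $P(F)$ directly — and, when $\mu$ is not positive, the step from ``$\mu$ annihilates products $I\times A$'' to ``$\mu$ annihilates every Borel subset of $\mathbb{R}^{k}\times B$''. I expect this last step to be the only genuine obstacle; it goes through precisely because parallelepipeds, and then measurable rectangles, are $\pi$-systems generating the relevant $\sigma$-algebras.
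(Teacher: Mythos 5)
Your proof is correct and follows the same basic route as the paper's: project $F$ to $\mathbb{R}^l$, use the Lipschitz nature of the projection to control the dimension of the shadow, and then propagate the ``$\mu$ kills products'' hypothesis across the generated $\sigma$-algebra via a $\pi$-system argument.

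Where you genuinely improve on the paper's proof is the treatment of the shadow. The paper applies its hypothesis directly with $A = \pi_{\mathbb{R}^l}[F]$, but the continuous image of a Borel set is in general only analytic, not Borel, so the hypothesis (stated for Borel $A$) does not apply verbatim. You close this gap by picking $s$ with $\dim F < s < \alpha$, noting $\mathcal{H}^s(P(F)) = 0$, and then invoking Borel regularity of Hausdorff measure to enclose $P(F)$ in a Borel set $B$ that is still $\mathcal{H}^s$-null, hence of dimension $< \alpha$. Your $\pi$-system argument (boxes generate $\mathcal{B}(\mathbb{R}^k)$, then measurable rectangles generate $\mathcal{B}(\mathbb{R}^k\times B)$, each step pushed to signed/vector measures through the Jordan decomposition) is also spelled out more carefully than the paper's one-line appeal to ``the $\sigma$-algebra is generated by $\tilde I\times(A\cap J)$'', though the content is the same. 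In short: same strategy, but yours is the more rigorous write-up, and the Borel-envelope step is a real repair, not just polish.
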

\begin{proof}
First, we prove that~$\mu \big|_{I\times A} = 0$ for every~$I$ and~$A$ as above. Indeed, the~$\sigma$-algebra of measurable subsets of~$I\times A$ is generated by the sets~$\tilde{I}\times (A\cap J)$, where~$J$ is an arbitrary parallelepiped in~$\mathbb{R}^l$ and~$\tilde{I}$ is a parallelepiped inside~$I$. By the assumptions,~$\mu(\tilde{I}\times \big(A\cap J)\big) = 0$ (because~$\dim A\cap J \leq \dim A <\alpha$). Therefore,~$\mu \big|_{I\times A} = 0$.

Assume the contrary, suppose that~$\mu(F) \ne 0$, but~$\dim F < \alpha$. Then, $\dim \pi_{\mathbb{R}^l}[F] < \alpha$, because the projection does not increase the dimension (it is a Lipshitz mapping). So,~$\mu \big|_{\mathbb{R}^k \times\pi_{\mathbb{R}^l}[F]} = 0$. But~$F \subset \mathbb{R}^k\times \pi_{\mathbb{R}^l}[F]$, which contradicts~$\mu(F) \ne 0$.
\end{proof}

\paragraph{Proof of Theorem~\ref{DifferentDerivativesTheorem}.}
Assume the contrary, let~$F$ be some Borel set such that~$\mu(F)\ne 0$, but~$\dim F < n-1$. We may assume that~$\mu_1(F) \ne 0$ (by symmetry) and~$F$ is compact (due to the regularity of measure). Multiplying~$f$ by a test function that equals~$1$ on~$F$, we make~$f$ compactly supported without loosing the condition that its higher order derivatives are signed measures. To get a contradiction, it is sufficient to prove that for every set~$A \subset \mathbb{R}^{n-1}$ such that~$\dim A < n-1$ and every function~$\psi \in C_0^{\infty}(\mathbb{R})$
\begin{equation}\label{ZeroFormula}
\int\limits_{\mathbb{R}\times A} \psi(x_1)\,d\mu_1(x) = 0.
\end{equation}
Then, approximating the characteristic functions of intervals by smooth functions, we get the hypothesis of Lemma~\ref{MeasureSection} with~$\alpha = n-1$, which, in its turn, asserts that~$\mu_1(F)=0$.

Consider now a signed measure~$\mu_{\psi}$ on~$\mathbb{R}^{n-1}$ given by formula
\begin{equation*}
\mu_{\psi}(B) = \int\limits_{\mathbb{R}\times B}\psi(x_1)\,d\mu_1(x).
\end{equation*}
By Lemma~\ref{MainEstimate},~$\mu_{\psi}$ satisfies the hypothesis of Lemma~\ref{FrostmanGeneralization} with~$\alpha = n-1$,~$k=1$,~$l=n-1$. Therefore,~$\dim\mu_{\psi} \geq n-1$ and equality~\eqref{ZeroFormula} holds for all~$A$ with~$\dim A < n-1$.

\section{Proof of Lemma~\ref{FrostmanGeneralization}}\label{s3}
To prove Lemma~\ref{FrostmanGeneralization}, we need some preparation. Obviously, if~$\mu$ is a complex-valued measure, then 
\begin{equation*}
\dim\mu = \min(\dim\Re\mu,\dim\Im\mu).
\end{equation*}
Therefore, it is enough to prove Lemma~\ref{FrostmanGeneralization} for real-valued signed measures only. 

Next lemma provides a softer substitute for the Lebesque differentiation theorem for an arbitrary Borel measure. 
\begin{Le}\label{SubstituteLebesque}
Let~$\mu$ be a signed measure\textup, let~$A_+$ and~$A_-$ be the sets of its Hahn decomposition\textup, let~$\mu_+$ be its positive part. Consider the set
\begin{equation}\label{p_+}
P_+ = \{x \in A_+\mid \exists \, \delta(x) \quad \hbox{such that}\quad \forall  r < \delta(x)\quad \mu_+\big(B_r(x)\big) \leq 10\mu\big(B_r(x)\big)\}.
\end{equation}
Then~$\mu(P_+) = \mu(A_+)$.
\end{Le}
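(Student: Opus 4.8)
The plan is to argue that the complementary set $Q_+ = A_+ \setminus P_+$ is $\mu_+$-null, which immediately gives $\mu(P_+) = \mu_+(P_+) = \mu_+(A_+) = \mu(A_+)$ (the first equality because $P_+ \subset A_+$, on which $\mu$ and $\mu_+$ agree). By definition, a point $x$ lies in $Q_+$ precisely when there are arbitrarily small radii $r$ with $\mu_+(B_r(x)) > 10\,\mu(B_r(x))$, equivalently $\mu_-(B_r(x)) = \mu_+(B_r(x)) - \mu(B_r(x)) > \tfrac{9}{10}\mu_+(B_r(x))$, so at such scales the negative part already carries most of the total positive mass. Thus every $x \in Q_+$ is the center of arbitrarily small balls on which $\mu_-$ dominates, and I want to conclude that $Q_+$ sits inside the support of $\mu_-$ in a quantitative enough way to be $\mu_+$-negligible.

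The key step is a Vitali-type covering argument. Fix $\varepsilon > 0$; by outer regularity choose an open set $U \supset Q_+$ with $\mu_+(U) < \mu_+(Q_+) + \varepsilon$, and also $\mu_-(U) < \mu_-(Q_+) + \varepsilon$ — but note $Q_+ \subset A_+$, so $\mu_-(Q_+) = 0$ and hence $\mu_-(U) < \varepsilon$. For each $x \in Q_+$ pick a ball $B_r(x) \subset U$ with $\mu_+(B_r(x)) < 10\,\mu(B_r(x)) \le 10\,\mu_-(B_r(x)) + 10\,\mu(B_r(x))$; more usefully, from $\mu_+(B_r(x)) > 10\mu(B_r(x))$ we get $\mu_+(B_r(x)) < \tfrac{10}{9}\mu_-(B_r(x))$. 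By the Vitali covering lemma (in the $5r$-form) extract a countable disjoint subfamily $\{B_{r_k}(x_k)\}$ whose $5$-fold dilates cover $Q_+$. Then
\begin{equation*}
\mu_+(Q_+) \le \sum_k \mu_+\big(B_{5r_k}(x_k)\big),
\end{equation*}
and here is the one point requiring care: I need the defining inequality to persist from $B_{r_k}$ to $B_{5r_k}$, or I need to choose the radii $r_k$ from the outset small enough (using that $\delta(x)$-type thresholds exist at all small scales) that $B_{5r_k}(x_k)$ still lies in $U$ and still satisfies a comparable bound. The cleanest route is to note we may pick the $r_k$ as small as we wish, so arrange $B_{5r_k}(x_k) \subset U$ and sum the cruder bound $\mu_+(B_{5r_k}(x_k)) \le \mu_+(U)$ is useless; instead sum $\mu_-$: since the $B_{r_k}$ are disjoint and contained in $U$, $\sum_k \mu_-(B_{r_k}(x_k)) \le \mu_-(U) < \varepsilon$, while by the selection $\mu_+(B_{5r_k}(x_k))$ must be controlled by $\mu_-$ on a fixed-ratio dilate.

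To make that last control honest, I would instead run the covering argument on the quantity that is monotone: define for each $x \in Q_+$ the radius so that $\mu_+(B_r(x)) \le 10\,\mu(B_r(x))$ fails, i.e. $\mu_+ \le \tfrac{10}{9}\mu_-$ on $B_r(x)$; apply Vitali to get disjoint $B_{r_k}(x_k)$ with $Q_+ \subset \bigcup_k B_{5r_k}(x_k)$; then use the Besicovitch covering theorem instead of $5r$-Vitali if the dilation factor is an obstacle, since Besicovitch gives a bounded-overlap cover by the \emph{original} balls $B_{r_k}(x_k)$ themselves. With Besicovitch, $Q_+ \subset \bigcup_k B_{r_k}(x_k)$ with overlap at most $C_n$, so
\begin{equation*}
\mu_+(Q_+) \le \sum_k \mu_+\big(B_{r_k}(x_k)\big) \le \tfrac{10}{9}\sum_k \mu_-\big(B_{r_k}(x_k)\big) \le \tfrac{10}{9}\,C_n\,\mu_-(U) < \tfrac{10}{9}\,C_n\,\varepsilon.
\end{equation*}
Letting $\varepsilon \to 0$ gives $\mu_+(Q_+) = 0$, hence $\mu_-(B_r(x))$ dominates only on a null set, and $\mu(P_+) = \mu(A_+)$ as claimed. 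I expect the main obstacle to be precisely this bookkeeping around which covering theorem to invoke: a naive $5r$-Vitali loses the defining inequality under dilation, so either one must exploit that the thresholds $\delta(x)$ hold at \emph{all} small scales (allowing the radii to be chosen freely small while keeping dilates inside $U$) or one passes to Besicovitch to avoid dilation altogether; everything else is routine Hahn-decomposition manipulation.
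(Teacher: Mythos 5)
Your final argument (the one running through the Besicovitch covering theorem) is correct and reaches the right conclusion. The paper reaches the same destination by the same general strategy — cover $Q_+$ by small balls inside an open set $U_\eps \supset A_+$ on which $\mu_-$ has tiny mass, then play $\mu_+$ and $\mu_-$ off against each other — but it invokes the Vitali covering theorem for general Radon measures (Mattila, Theorem~2.8) rather than Besicovitch. Vitali in that form extracts a \emph{disjoint} subfamily of the original (undilated) balls covering $Q_+$ up to $\mu_+$-measure zero, so no overlap constant ever appears: the paper then just sums $\mu_+(B_{r_j}) \geq 10\mu(B_{r_j}) = 10\mu_+(B_{r_j}) + 10\mu_-(B_{r_j})$ over the disjoint family and rearranges to get $9\mu_+(Q_+) \leq 10\eps$. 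Your Besicovitch route pays the dimensional overlap constant $C_n$ but is equally valid, since $C_n$ is absorbed when $\eps \to 0$. You were right to distrust the $5r$-Vitali attempt: the defining inequality on $B_{r}(x)$ does not transfer to $B_{5r}(x)$, and your diagnosis that one should either avoid dilation (Besicovitch) or use a covering theorem that yields undilated balls (the paper's choice) is exactly the fork in the road. One small slip worth flagging: early on you write $\mu_+(B_r(x)) < 10\,\mu(B_r(x))$ for $x \in Q_+$, which is the reverse of the defining inequality; you silently correct this a line later, but the sign matters, so state it carefully in a final write-up.
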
 
\begin{proof}
Let~$Q_+$ be~$A_+ \setminus P_+$, we are going to show that~$\mu_+(Q_+) = 0$. Take any~$\eps > 0$, and let~$U_{\eps}$ be an open set such that~$A_+ \subset U_{\eps}$ and~$\mu_-(U_{\eps}) < \eps$. For every point~$x$ in~$Q_+$ there exists a sequence~$r_k$,~$r_k \to 0$, such that~$\mu_+(B_{r_k}(x)) \geq 10 \mu(B_{r_k}(x))$ and~$B_{r_k}(x) \subset U_{\eps}$. Therefore, by the Vitali covering theorem (see~\cite{M}, Theorem~$2.8$), there exists a disjoint collection of such balls~$B_{r_j}(x_j)$ such that~$\mu_+(Q_+\setminus \cup_{j}B_{r_j}(x_j)) = 0$. Therefore,~$\mu_+(\cup_{j}B_{r_j}(x_j)) \geq \mu_+(Q_+)$. On the other hand,~$\big|\mu_-(\cup_{j}B_{r_j}(x_j))\big| \leq \big|\mu_-(U_{\eps})\big| < \eps$. We can write
\begin{multline*}
\mu_+\big(\cup_{j}B_{r_j}(x_j)\big) = \sum\limits_{j}\mu_+\big(B_{r_j}(x_j)\big) \geq 10\sum\limits_{j}\mu\big(B_{r_j}(x_j)\big)=\\
10\sum\limits_{j}\mu_+\big(B_{r_j}(x_j)\big)+ 10\sum\limits_{j}\mu_-\big(B_{r_j}(x_j)\big) \geq 10\mu_+\big(\cup_{j}B_{r_j}(x_j)\big)-10\eps.
\end{multline*}
So,~$9\mu_+(\cup_{j}B_{r_j}(x_j)) \leq 10\eps$, consequently,~$9\mu_+(Q_+)\leq 10\eps$. Making~$\eps$ arbitrary small, we get~$\mu_+(Q_+) = 0$.
\end{proof}

Consider now the sets~$P_+^{(N)}$ given by the formula
\begin{equation*}
P_+^{(N)} = \Big\{x\in A_+\Big| \forall r < \frac{1}{N} \quad \mu_+\big(B_r(x)\big) \leq 10\mu\big(B_r(x)\big)\Big\}.
\end{equation*}
Surely,~$P_+ = \cup_{N}P_+^{(N)}$. Therefore, for every~$\eps > 0$ there exists~$N \in \mathbb{N}$ such that~$\mu_+\big(P_+^{(N)}\big) \geq \mu_+(A_+) - \eps$. We need to change inequality~\eqref{p_+} slightly.
\begin{Lemma}
Suppose that for some~$x$ fixed and all~$r \leq 2\delta$ the inequality~$\mu_+\big(B_r(x)\big) \leq 10\mu\big(B_r(x)\big)$ holds true. Then
\begin{equation}\label{p_+smoothed}
\int\varphi_r(x+y)\,d\mu_+(y) \leq 10 \int\varphi_r(x+y)\,d\mu(y)
\end{equation}
for all~$r < \delta$ and any radial non-negative test-function~$\varphi$ supported in~$B_1(0)$ that decreases as the radius grows.
\end{Lemma}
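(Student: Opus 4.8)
The plan is to use that $\varphi$, being radial, non-negative and non-increasing in the radius, is a positive superposition of indicators of concentric balls; inequality~\eqref{p_+smoothed} will then follow from the hypothesized ``ball'' inequality $\mu_+(B_r(x))\le 10\mu(B_r(x))$ simply by integrating that superposition against $\mu_+$ and against $\mu$ separately.

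Concretely, write $\varphi(z)=\psi(|z|)$ with $\psi\colon[0,\infty)\to[0,\infty)$ non-increasing, right-continuous, and vanishing on $[1,\infty)$. Being bounded and monotone, $\psi$ is the tail of a finite non-negative Lebesgue--Stieltjes measure $\nu$ supported on $(0,1]$, that is, $\psi(u)=\nu\big((u,\infty)\big)$. Consequently, for every $y$,
\[
\varphi_r(x+y)=\psi\!\Big(\frac{|x+y|}{r}\Big)=\int_{(0,1]}\mathbf 1\big\{y\in B_{rs}(x)\big\}\,d\nu(s),
\]
the super-level sets of $y\mapsto\varphi_r(x+y)$ being balls of radius $rs\le r$ concentric at the point fixed in the hypothesis. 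Since $\varphi_r$ is bounded with compact support and $\mu$ is a locally finite signed measure, $\int\varphi_r(x+y)\,d\mu_\pm(y)$ are both finite, so Tonelli's theorem applied separately to $\mu_+$ and to $\mu_-$ gives
\[
\int\varphi_r(x+y)\,d\mu_+(y)=\int_{(0,1]}\mu_+\big(B_{rs}(x)\big)\,d\nu(s),\qquad
\int\varphi_r(x+y)\,d\mu(y)=\int_{(0,1]}\mu\big(B_{rs}(x)\big)\,d\nu(s).
\]

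Finally, for $s\in(0,1]$ and $r<\delta$ one has $rs\le r<\delta\le 2\delta$, so the hypothesis applies at every radius $rs$ and yields $\mu_+\big(B_{rs}(x)\big)\le 10\,\mu\big(B_{rs}(x)\big)$ pointwise in $s$; multiplying by $d\nu(s)\ge 0$ and integrating over $(0,1]$ gives precisely~\eqref{p_+smoothed}. I do not expect a genuine obstacle here: the only items needing (routine) care are the Lebesgue--Stieltjes representation of the radial profile together with the attendant open-versus-closed-ball bookkeeping, the Fubini/Tonelli step carried out for the two parts of the signed measure, and the trivial verification that all the radii $rs$ stay below the threshold $2\delta$ on which the hypothesis is available — which is precisely why the hypothesis was stated with that factor of two to spare.
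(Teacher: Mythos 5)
Your argument is correct. The paper omits this proof ("we leave the verification of this lemma to the reader"), and the layer-cake decomposition you use---writing the non-increasing radial profile $\psi$ as $\psi(u)=\nu\big((u,\infty)\big)$ for a finite non-negative measure $\nu=-d\psi$ on $(0,1]$, applying Tonelli to $\mu_+$ and $\mu_-$ separately, and then integrating the hypothesized ball inequality against $d\nu$---is exactly the intended, standard verification; the bookkeeping items you flag (no atom of $\nu$ at $0$ since $\psi$ is continuous, $rs\le r<\delta<2\delta$, open-versus-closed balls harmless since the hypothesis holds on a full interval of radii) are indeed routine.
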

We leave the verification of this lemma to the reader. 
\begin{Lemma}\label{HahnDecompositionAndDimension}
Let~$\mu$ be a signed measure. Let~$\mu_+$ and~$\mu_-$ be its positive and negative parts. Then~$\dim\mu =\min(\dim\mu_+,\dim\mu_-)$.
\end{Lemma}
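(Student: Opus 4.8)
The plan is to prove the two inequalities $\dim\mu\ge\min(\dim\mu_+,\dim\mu_-)$ and $\dim\mu\le\min(\dim\mu_+,\dim\mu_-)$ separately. The first is elementary: if $F$ is a Borel set with $\mu(F)\ne 0$, then $\mu_+(F)\ne 0$ or $\mu_-(F)\ne 0$, so the infimum defining $\dim\mu$ runs over a subset of the sets appearing for $\mu_+$ or for $\mu_-$; hence $\dim\mu\ge\min(\dim\mu_+,\dim\mu_-)$. (Indeed $\dim\mu=\min(\dim\mu_+,\dim\mu_-)$ would follow at once were it not that $\mu_+$ and $\mu_-$ can cancel on sets where both are nonzero — that is exactly the content of the nontrivial direction.)

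For the reverse inequality it suffices, by symmetry, to show $\dim\mu\le\dim\mu_+$, i.e.\ to produce, for every $\eps>0$, a Borel set $F$ with $\mu(F)\ne 0$ and $\dim F\le\dim\mu_+ +\eps$ — or more precisely, given any Borel set $G$ with $\mu_+(G)\ne 0$ and $\dim G$ close to $\dim\mu_+$, to manufacture from $G$ a set of no larger dimension on which the signed measure $\mu$ itself does not vanish. The obstacle is cancellation: $G$ might satisfy $\mu(G)=0$ because $\mu_-(G)=\mu_+(G)$. The idea is to use Lemma \ref{SubstituteLebesque}: the set $P_+$ there satisfies $\mu_+(P_+)=\mu_+(A_+)>0$ and at every point of $P_+$ we have $\mu_+(B_r(x))\le 10\,\mu(B_r(x))$ for all small $r$, so in particular $\mu(B_r(x))\ge\tfrac1{10}\mu_+(B_r(x))>0$ eventually. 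Thus on $P_+$ the positive part cannot be locally swamped by the negative part. I would intersect the given $G$ with $P_+$ (losing nothing in dimension, since intersection only shrinks the set) to get a Borel set $G'=G\cap P_+$ with $\dim G'\le\dim G$; after discarding a subset of $\mu_+$-measure $\eps$ we may assume $G'\subset P_+^{(N)}$ for a fixed $N$, so the inequality $\mu_+(B_r(x))\le 10\mu(B_r(x))$ holds uniformly for $r<1/N$ and all $x\in G'$. We still need $\mu_+(G')>0$, which holds provided $G$ was chosen inside $A_+$ from the outset (legitimate, as $\dim\mu_+=\dim(\mu_+\!\restriction_{A_+})$) and with $\dim G<\dim\mu_+ +\eps$.

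It remains to pass from the pointwise ball estimate on $G'$ to a genuine lower bound $|\mu(G')|>0$ (or on a slightly enlarged neighbourhood of $G'$). Here I would cover $G'$ efficiently by disjoint small balls $B_{r_j}(x_j)$, centred on $G'$, with $r_j<1/N$, chosen via the Vitali covering lemma so that $\mu_+\big(\bigcup_j B_{r_j}(x_j)\big)\ge\tfrac12\mu_+(G')$ — or rather use the smoothed version \eqref{p_+smoothed} to get $\int\varphi_{r_j}(x_j+y)\,d\mu_+(y)\le 10\int\varphi_{r_j}(x_j+y)\,d\mu(y)$ on each ball — and then sum: $\sum_j\int\varphi_{r_j}(x_j+y)\,d\mu(y)\ge\tfrac1{10}\sum_j\int\varphi_{r_j}(x_j+y)\,d\mu_+(y)\ge c\,\mu_+(G')>0$, a bound that does not degrade as the balls shrink. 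Letting the radii tend to $0$ (and controlling the $\mu_-$-mass of the shrinking unions by outer regularity, exactly as in the proof of Lemma \ref{SubstituteLebesque}), one concludes that $\mu$ charges every neighbourhood of $G'$, hence charges a set of dimension at most $\dim G'\le\dim\mu_+ +\eps$; as $\eps$ is arbitrary, $\dim\mu\le\dim\mu_+$. The main point to be careful about is that passing to the limit in the radii does not accidentally restrict us to a set of \emph{larger} dimension than $G'$ — this is handled by noting that $G'$ itself, being a subset of the closure of $\bigcup_j B_{r_j}(x_j)$ up to a $\mu_+$-null set at each stage, is the set actually being charged.
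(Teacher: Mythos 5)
Your proposal would work in the end, but you have missed the observation that makes the lemma nearly trivial, and as a result you deploy machinery — Lemma~\ref{SubstituteLebesque}, a Vitali covering, a limiting argument over shrinking radii, an appeal to outer regularity — that is not needed at all. The structural fact of the Hahn decomposition that you should be using is that $\mu_-$ vanishes on \emph{every} Borel subset of $A_+$, so that for any Borel $E\subset A_+$ one has $\mu(E)=\mu_+(E)$ \emph{exactly}, not merely $\mu(B_r(x))\geq\frac{1}{10}\mu_+(B_r(x))$ locally. You come within a hair's breadth of this when you intersect $G$ with $P_+\subset A_+$: at that point $G'=G\cap P_+$ lies inside $A_+$, so $\mu(G')=\mu_+(G')=\mu_+(G)>0$ immediately, and you are done with $F=G'$. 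But instead of finishing there you downgrade to the approximate local inequality from Lemma~\ref{SubstituteLebesque} and then need the entire Vitali-plus-regularity apparatus to recover a global statement, which is where the loose ends creep in (the step from ``$\mu$ charges every neighbourhood of $G'$'' to ``$\mu$ charges a set of dimension at most $\dim G'$'' needs outer regularity of $\mu_\pm$ and $G'$ compact, and you only gesture at it).

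The paper's proof is precisely the shortcut you missed: assume $\dim\mu>\dim\mu_+$, take a Borel $F$ with $\mu_+(F)>0$ and $\dim F<\dim\mu$, replace it by $A_+\cap F$ (which does not increase dimension and still has $\mu_+(A_+\cap F)=\mu_+(F)>0$ since $\mu_+$ is concentrated on $A_+$), and observe $\mu(A_+\cap F)=\mu_+(A_+\cap F)>0$ because $\mu_-$ is null on $A_+$ — contradiction. No covering lemma, no Lemma~\ref{SubstituteLebesque}, no limiting argument. The cancellation you were guarding against cannot occur once you restrict to $A_+$.
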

\begin{proof}
The inequality~$\dim\mu \geq \min(\dim\mu_+,\dim\mu_-)$ is obvious. 

Let~$\mu_+$ be concentrated on~$A_+$ and let~$\mu_-$ be concentrated on~$A_-$, i.e.~$\mu_{+}(A+) = \mu(A_+)$ and~$\mu_-(A_-) = \mu(A_-)$. Assume the contrary,~$\dim\mu > \dim\mu_+$. This means that there exists some Borel set~$F$ such that~$\mu_+(F) > 0$ and~$\dim F < \dim\mu$. Surely,~$\mu_+(A_+\cap F) > 0$ and~$\dim \big(A_+\cap F\big) < \dim\mu$. But~$\mu(A_+\cap F) = \mu_+(A_+\cap F) > 0$, a contradiction. 
\end{proof}

\paragraph{Proof of Lemma~\ref{FrostmanGeneralization}.}
We assume the contrary, suppose there exists some Borel set~$F$ such that~$\mu(F) \ne 0$, but~$\dim(F) < \alpha^- < \alpha$. By Lemma~\ref{HahnDecompositionAndDimension}, we may assume that~$F \subset A_+$, moreover, we may assume that~$F \in P_+^{(N)}$ for some big~$N$ (because these sets tend to~$A_+$ in measure) and~$F$ is compact (by the regularity of~$\mu$). Let~$\mu(F) = c_0$. By definition of the Hausdorff dimension, there exists a covering of~$F$ with the balls~$B_{r_j}(x_j)$ whose centers~$x_j$ lie in~$F$, whose radii~$r_j$ do not exceed~$\delta$ (which we take to be less than~$\frac{1}{10N}$), and~$\sum\limits_{j}r_j^{\alpha^-} \leq c_1$ for some uniform constant~$c_1$. We divide the set of balls into the classes of approximately equal balls:
\begin{equation*}
E_k = \big\{j\mid r_j \in (2^{-k-1},2^{-k}]\big\}.
\end{equation*}
Surely,~$|E_k| \leq 2^{k\alpha^-}c_1$. By the pigeonhole principle, there exists some~$k \gtrsim \log \frac{1}{\delta}$ such that
\begin{equation*}
\mu_+\big(F\cap\cup_{j\in E_k}B_{r_j}(x_j)\big) \geq \frac{c_0}{k^2}.
\end{equation*}
We fix~$\delta$ and also fix this~$k$ for a while. Let~$D_k$ be a subset of~$E_k$ such that~$\{x_j\mid j \in D_k\}$ is a maximal~$2^{-k}$-separated subset of~$\{x_j\mid j \in E_k\}$. Then
\begin{enumerate}
\item $\cup_{j \in D_k} B_{3r_j}(x_j) \supset F \cap \cup_{j \in E_k} B_j$, so~$\sum\limits_{j \in D_k} \mu_{+}\big(B_{3r_j}(x_j)\big) \geq \frac{c_0}{k^2}$;
\item The collection~$B_{4r_j}(x_j)$,~$j \in D_k$ covers each point only a finite number of times (uniformly).

Using these two statements and recallin that~$\varphi$ is equal~$1$ on~$B_{\frac34}(0)$, we can write
\begin{multline*}
\frac{c_0}{k^2} \leq \sum\limits_{j \in D_k} \mu_{+}(B_{3r_j}(x_j)) \leq \sum\limits_{j \in D_k} \int \varphi_{4r_j}(x_j + y)\,d\mu_+(y) \leq \\
10 \sum\limits_{j \in D_k} \int \Big|\varphi_{4r_j}(x_j + y)\,d\mu(y)\Big| \lesssim \Big(\sum\limits_{j \in D_k} r_j^{\alpha}\Big)^{\beta}\leq
\Big(|D_k|2^{-k\alpha}\Big)^{\beta} \lesssim c_1^{\beta}2^{\beta k (\alpha^- - \alpha)} \lesssim \delta^{\beta(\alpha - \alpha^-)}.
\end{multline*}
We get a contradiction for~$\delta$ small enough.
\end{enumerate}

\section{Examples and conjectures}\label{s4}
We note that Theorem~\ref{DifferentDerivativesTheorem} is sharp in the sense that one cannot rise the dimension. Consider a one-dimensional signed measure
\begin{equation}\label{FiniteDifference}
\Delta_h^s = \sum\limits_{j=0}^{s} (-1)^{s-j}C_s^j \delta_{hj}.
\end{equation}
This measure has~$s$ vanishing moments, therefore, there exists a compactly supported function~$f_h^s$ such that~$D^{s} f_h^s = \Delta_{h}^s$. Consider a function~$F$ on~$\mathbb{R}^n$ given by formula
\begin{equation*}
F(x) = \prod_{i=1}^n f_{h}^{m_i}(x_i).
\end{equation*}
Then, for each~$i$,~$D_i^{m_i} F$ is a measure supported on~$(n-1)$-dimensional hypercubes
\begin{equation*}
\{x \mid x_i = hj,\,\, \forall k \ne i \quad x_k \in [0,(m_k+1)h]\},
\end{equation*}
here~$j = 0,1,2,\ldots,m_i$. This proves that Theorem~\ref{DifferentDerivativesTheorem} is sharp.

Theorem~$4$ from~\cite{Kol} we have used is very strong. For the isotropic case, we what we need is the inequality~$\|D_i^{m-1} f\|_{L^{\frac{n}{n-1}}} \leq \|f\|_{W^m_1}$, which is much easier (see~\cite{S}). However, even embedding theorems from~\cite{S} are not enough strong for our purposes in the general setting (they need additional assumptions on the numbers~$m_i$).

We believe that the relations between Conjecture~\ref{GeneralConj} and embedding theorems are more deep. Maybe, embedding theorems for vector fields from~\cite{vS2} may help (there was a lot of progress in the recent years for the isotropic case, starting with~\cite{BB,vS}; see~\cite{KMS} for some examples of anisotropic theorems of such kind). 


{\small

D.M.~Stolyarov,\\
St.~Petersburg Department 
of Steklov Mathematical Institute RAS,\\Fontanka 27, St.~Petersburg, 
Russia;\\
Chebyshev Laboratory (SPbU), \\14th Line 29B, Vasilyevsky Island, St.~Petersburg, Russia.\\
e-mail: dms@pdmi.ras.ru.\\
\bigskip

M.~Wojciechowski,\\
Institute of Mathematics, 
Polish Academy of Sciences\\
00-956 Warszawa, 
Poland.\\
e-mail: miwoj-impan@o2.pl.
}

\begin{thebibliography}{99}
\bibitem{AFP} L.~Ambrosio, N.~Fusco, D.~Pallara, \emph{Functions of Bounded Variation and Free Discontinuity Problems}, Oxford Mathematical Monographs, 2000.
\bibitem{BB} J.~Bourgain, H.~Brezis, \emph{New estimates for the Laplacian\textup, the $\Div$--$\curl$\textup, and related Hodge
systems}, C. R. Math. Acad. Sci. Paris {\bf 338}, 539--543, 2004.
\bibitem{BIN} O.V.~Besov, V.P.~Il'in, S.M.~Nikolski, \emph{Integral representations of functions and embedding theorems}, 1975.
\bibitem{KMS} S.V.~Kislyakov, D.V.~Maksimov, D.M.~Stolyarov, \emph{Spaces of smooth functions generated by nonhomogeneous differential expressions}, Funkts. Anal. Prilozh., {\bf 47}:2, 89--92, 2013.
\bibitem{Kol} V.I.~Kolyada, \emph{On an embedding of Sobolev spaces}, Mat. Zametki, {\bf 54}:3, 48--71, 1993 (Russian).
\bibitem{Kol2} V.I.~Kolyada, \emph{Estimates of Fourier transform in Sobolev spaces}, Studia Math., {\bf 125} (1), 67--74, 1997.
\bibitem{Kol3} V.I. Kolyada, \emph{Rearrangements of Functions and Embedding of Anisotropic Spaces
of Sobolev Type}, East Journal on Approximations, vol. {\bf 4}, nr. 2, 111--198, 1998.
\bibitem{K} P.~Koosis, \emph{Introduction to~$H^p$spaces}, sec. ed., Cambridge Universtiy Press, 1998.
\bibitem{M} P.~Mattila, \emph{Geometry of sets and measures in Euclidean space}, Cambridge University Press, 1995. 
\bibitem{P} J.~Peetre, \emph{New thoughts on Besov spaces}, Duke University Mathematical Series I,  Duke University, 1976.
\bibitem{RW} M.~Roginskaya,~M.~Wojciechowski, \emph{Singularity of vector valued measures in terms of Fourier transform}, The Journal of Fourier Analysis and Applications, vol. {\bf 12}, issue 2, 2006.
\bibitem{vS} J.~van Schaftingen, \emph{Estimates for~$L^1$-vector fields}, C. R. Math. Acad. Sci. Paris {\bf 339}, 181--186, 2004.
\bibitem{vS2} J.~van~Schaftingen, \emph{Limiting Sobolev inequalities for vector fields and canceling linear differential operators}, J. Eur. Math. Soc. (JEMS) {\bf 15}, no. 3, 877--921, 2013.
\bibitem{S} V.A.~Solonnikov, \emph{On certain inequalities for functions belonging to~$\overrightarrow{W}_p(\mathbb{R}^n)$-classes}, Zap. Nauchn. Sem. LOMI {\bf 27},  194--210, 1972 (Russian).
\bibitem{U} A.~Uchiyama, \emph{A constructive proof of the Fefferman--Stein decomposition of~$\BMO(\mathbb{R}^n)$}, Acta Math., vol.{\bf 148}, Issue 1, pp 215-241, 1982.
\end{thebibliography}
\end{document}